\numberwithin{equation}{section}
\newtheorem{definition}{Definition}[section]
\newtheorem{theorem}[definition]{Theorem}
\newtheorem{remark}[definition]{Remark}
\newtheorem{proof}[definition]{Proof}
\newcommand \bei {\begin{itemize}}
\newcommand \eei {\end{itemize}}
\newcommand \del \partial
\begin{document}
\begin{frontmatter}
 \title{A well-balanced positivity preserving cell-vertex finite volume method satisfying the discrete maximum-minimum principle for coupled models of surface water flow and scalar transport} 
\author{Hasan Karjoun$^a$, Abdelaziz Beljadid$^{a,b,*}$, Philippe G. LeFloch$^c$ }
 \address{$^a$International Water Research Institute, Mohammed VI Polytechnic University, Green City, Morocco}
 \address{$^b$Department of Mathematics and Statistics, University of Ottawa, Ottawa, Canada}
\address{$^c$Laboratoire Jacques-Louis Lions \& Centre National de la Recherche Scientifique, Sorbonne Universit\'e, 4 Place Jussieu, 75258 Paris, France.}

\cortext[mycorrespondingauthor]{Corresponding author.  Email addresses : hassan.karjoun@um6p.ma (H. Karjoun), abeljadi@uottawa.ca (A. Beljadid), contact@philippelefloch.org (P.G. LeFloch)}

\begin{abstract}
We develop a new  finite volume method using unstructured mesh-vertex grids for coupled systems modeling shallow water flows and solute transport over complex bottom topography. Novel well-balanced positivity preserving discretization techniques are proposed for the water surface elevation and the concentration of the pollutant. For the hydrodynamic system, the proposed scheme preserves the steady state of a lake at rest and the positivity of the water depth. For the scalar transport equation, the proposed method guarantees the positivity and a perfect balance of the scalar concentration. The constant-concentration states are preserved in space and time for any hydrodynamic field and complex topography in the absence of source terms of the passive pollutant. Importantly and  this is one of the main features of our approach is that the novel reconstruction techniques proposed for the water surface elevation and concentration satisfy the discrete maximum-minimum principle for the solute concentration. We demonstrate, in a series of numerical tests, the well-balanced and positivity properties of the proposed method and the accuracy of our techniques and their potential advantages in predicting the solutions of the shallow water-transport model.
\end{abstract}
\begin{keyword}
Shallow water flow, solute transport, finite volume method, maximum-minimum principle, well-balanced and positivity preserving properties.
\end{keyword}

\end{frontmatter}
\section{Introduction}
The shallow water equations(SWEs), also called the Saint-Venant system \citep{de1871theorie}, is a system of partial-differential equations, commonly used to predict and describe flows where the water depth is much smaller than the horizontal length scale of motion and the variations of the 
flow in the vertical direction is negligible compared to its horizontal variations  \citep{vreugdenhil2013numerical,stelling1983construction,beljadid2013unstructured}. The SWEs are widely used in many applications involving free-surface flows in lakes, rivers, and oceans, and become an efficient tool for studying a wide range of hydraulic engineering problems,  as well as  tsunami and weather  predictions   \citep{bonev2018discontinuous,giraldo2002nodal,delestre2013swashes,
CeaVazquez,wu2018well}. The coupled system of SWEs and transport equation is used in many studies to predict the dynamics and the distribution of the concentration of pollutant in flows \citep{CeaVazquez,behzadi2018solution,begnudelli2006unstructured,liang2010well}, which 
have various applications in environmental risk assessment for the development of solutions for sustainable water resources management. The evolution of pollutants is complex, and is subject to various phenomena such as advection, dispersion and diffusion.

In the absence of viscous terms, the SWEs can be considered as a non-linear hyperbolic system of conservation (if the topography is constant) or balance (if the topography is not constant) law. While some analytical solutions are available for the SWEs, they are mainly limited to simple geometries and specific initial and boundary conditions, and in most practical contexts, we need appropriate numerical techniques to approximate the solutions of the system. The design and analysis of numerical schemes for SWEs with different source terms is a very challenging task due to the non-linearity of the system and to the nature of the solutions which can exhibit singularities (shock waves) appearing in finite time even from smooth initial conditions (\citep{leveque2002finite} and references therein). Among the different numerical techniques used to numerically solve the SWEs, the finite volume methods are most convenient to preserve the mass and momentum of the conservative system. Moreover, in the presence of source terms such as variable bottom topography, appropriate numerical techniques should be used to numerically solve the obtained system of balance law to respect a delicate balance between the flux and source terms. The numerical scheme should satisfy the well-balanced property, where the lake at rest steady-state solutions of the system should be preserved, and the numerical scheme should guarantee the positivity of the computed values of the water depth \citep{perthame2001kinetic,dong2020robust,audusse2004fast,hernandez2016central,kurganov2007s,liu2017coupled, ricchiuto2009stabilized}.

Various classes of shock-capturing schemes have been proposed in previous studies to numerically solve the system of SWEs. Godunov-type central schemes \citep{capilla2013new,vcrnjaric2004improved,russo2002central}, and upwind schemes \citep{berthon2012efficient,brufau2003unsteady,cea2010unstructured,leveque1998balancing} using finite volume techniques were proposed for this system.  Upwind schemes are based on approximation techniques of the resolution of Riemann problems at the interfaces of each computational cell,  whereas in central schemes no Riemann solvers are used. Kurganov et al.\cite{kurganov2001semidiscrete,kurganov2005central} introduced central-upwind schemes for hyperbolic conservation laws, where 
the approximation of fluxes at cell interfaces of the control volumes are based on information obtained from the local speeds of non-linear wave propagation. Their main advantages are the high resolution and simplicity of implementation. Central-upwind schemes were used in many recent works to approximate solutions of the SWEs \cite{kurganov2002central,bryson2011well,chertock2015well,hernandez2016central,liu2017coupled,bollermann2013well} and for solving nonlinear hyperbolic conservation laws on curved geometries \citep{beljadid2017central}. Beljadid et al. \cite{beljadid2016well} developed a central-upwind finite volume method on cell-vertex grids for shallow water flow over complex bottom topography, which has an advantage of using more cell interfaces providing more information on the waves propagating in different directions. Their 
techniques enjoy the well-balanced, non-oscillatory and positivity properties and have the advantage to be extended to multidimensional coupled models with unknown eigenstructure where it may be hard to obtain the analytical solution of the Riemann problem or its numerical approximation.

In the present study, we propose a new finite volume method for the simulation and prediction of pollution concentration in water bodies. We consider the coupled system of SWEs and the scalar transport equation \citep{CeaVazquez,behzadi2018solution,wu2018well,begnudelli2006unstructured,liang2010well}, with different source terms, such as variable bottom topography, bottom friction effects and diffusion. Novel techniques are proposed to numerically solve the resulting non-linear system where we extended the methodology proposed in \cite{beljadid2016well} to our system. The proposed method performs well in terms of numerical 
dissipation compared to the original scheme \citep{beljadid2016well, kurganov2007reduction}, especially for the solute transport equation. The developed scheme is well-balanced and preserves the positivity of the computed water depth and the concentration of the 
pollutant in each point of the computational domain at all times. Furthermore, the proposed numerical scheme satisfies the maximum-minimum principle for the concentration \cite{frolkovic1998maximum,kong2013high}, and the constant-concentration state is preserved in space and time for any hydrodynamic field of the flow in the absence of source terms of the passive pollutant.

The paper is organized as follows. In Section \ref{S2}, we present the coupled model for water flow and solute transport system. The proposed numerical scheme is introduced in Section \ref{nouvS3}. In Section \ref{S5}, we present the reconstruction of the hydrodynamic variables and we prove the positivity of
 the water depth for the semi-discrete form of the proposed method. The discretizations of the different source terms are given in Section \ref{S6}. In Section \ref{S7}, a new well-balanced and positivity preserving reconstruction for the scalar concentration is proposed and we prove the maximum and 
minimum principles for the scalar concentration. In Section \ref{S9}, we demonstrate the accuracy and stability of the proposed scheme using a variety of numerical examples. Finally, some concluding remarks are drawn in Section \ref{S10}.
\section{Model equations} \label{S2}
In this study,  we focus on the following coupled model of shallow water flow and solute transport system:
\begin{equation}
\left\{\begin{aligned}
&\frac{\partial h}{\partial t}+\frac{\partial hu}{\partial x}+\frac{\partial hv}{\partial y}=0,\\& \frac{\partial hu }{\partial t}+\frac{\partial }{\partial x}\Big(hu^2+\frac{g}{2}h^2\Big)+\frac{\partial }{\partial y}\Big(huv\Big)=-gh\frac{\partial B}{\partial x}-\frac{\tau_x}{\rho}   ,\\& \frac{\partial hv}{\partial t}+\frac{\partial }{\partial x}\Big(huv\Big)+\frac{\partial }{\partial y}\Big(hv^2+\frac{g}{2}h^2\Big)=-gh\frac{\partial B}{\partial  y}-\frac{\tau_y}{\rho},\\&\frac{\partial hc}{\partial t}+\frac{\partial huc}{\partial x}+\frac{\partial hvc}{\partial y}=\frac{\partial }{\partial x}\left(\gamma h\frac{\partial c}{\partial x}\right)+\frac{\partial }{\partial y}\left(\gamma h\frac{\partial c}{\partial y}\right),
\end{aligned}\right.
\label{Eq1}
\end{equation}
where $h$ is the water depth, $\mathbf{u}:=(u,v)^T$ is the depth averaged velocity field of the flow, $c$ is the concentration of the pollutant, the function $B(x,y)$ represents the bottom elevation, $\rho$ is the water density, $g$ is the gravity acceleration, and $\gamma$ is the effective diffusivity.\\
The components of the friction term are expressed using the Manning formulation as follows:
\begin{equation}
\left\{\begin{aligned}
&\frac{\tau_x}{\rho}=g\frac{n_f^{2}}{h^{1/3}}\left \| \mathbf{u} \right \|u, \\&\frac{\tau_y}{\rho}=g\frac{n_f^{2}}{h^{1/3}}\left \| \mathbf{u} \right \|v,
\end{aligned}\right.
\label{E1}
\end{equation}
where $n_f$ is the Manning coefficient and $\left \| \mathbf{u} \right \|$ is the norm of the vector velocity field of the flow.  

We introduce the new variables of the system: $ s:=hc$ is the conservative variable for the scalar transport equation, $ w:=h+B$ is the water surface  elevation, and $p:=hu$ and $ q:=hv$ are the water discharges in the $x$- and $y$-directions, respectively. With these definitions, the system (\ref{Eq1}) can be 
expressed in the following  form using the vector variable $\bm{U}=(w,p,q,s)^{T}$ for the flux vector and the bottom topography source term:
\begin{equation}
\bm{U}_t+\bm{F}(\bm{U},B)_x+\bm{G}(\bm{U},B)_y=\bm{S}(\bm{U},B)+\bm{I}+\bm{T},
\label{Eq3}
\end{equation}
where the components of the flux vector $\left(\bm{F},\bm{G}\right)^T$, and source terms $\bm{S}$, $\bm{I}$ and $\bm{T}$ are as follows:
\begin{equation}
\begin{aligned}
&\bm{F}(\bm{U},B)=\Big(p,\,\frac{p^2}{w-B}+\frac{g}{2}(w-B)^2,\,\frac{pq}{w-B},\frac{ps}{w-B}\Big)^T,\\
&\bm{G}(\bm{U},B)=\Big(q,\,\frac{pq}{w-B},\,\frac{q^2}{w-B}+\frac{g}{2}(w-B)^2,\frac{qs}{w-B}\Big)^T,\\
&\bm{S}(\bm{U},B)=\left(0,-g(w-B)B_x,-g(w-B)B_y ,0\right)^T,\\
&\bm{I}=\left(0,-g\frac{n_f^{2}}{h^{1/3}}\left \| \mathbf{u} \right \|u, -g\frac{n_f^{2}}{h^{1/3}}\left \| \mathbf{u} \right \|v,0\right)^T,\\
&\bm{T}=\left(0,0,0,\frac{\partial }{\partial x}\left(\gamma h\frac{\partial c}{\partial x}\right)+\frac{\partial }{\partial y}\left(\gamma h\frac{\partial c}{\partial y}\right)\right)^T.
\end{aligned}
\label{Eq4}
\end{equation}

\section{The proposed cell-vertex method}\label{nouvS3}

\subsection{Semi-discrete form of the central-upwind scheme}\label{S3}
In this section, we will first extend the formulation of the cell-vertex central-upwind scheme developed in \cite{beljadid2016well} for our coupled system (\ref{Eq3})-(\ref{Eq4}). We used unstructured cell-vertex 
grids where the domain is partitioned into non-overlapping computational cells $D_j$ of area $|D_{j}|$, obtained by connecting the centroids of the primary triangular grids as shown in Figure~\ref{Fig1}. Let $D_{jk}$ $k=1,2,..., m_j$ be the neighboring cells of $D_j$ with common edges $e_{jk}$ of length $\ell_{jk}$, and define $\bm{n}_{jk}=\left(\cos(\theta_{jk}),\sin(\theta_{jk}) \right)^{T}$ the outward unit normal vector to the cell interface $e_{jk}$. Denote by $(x_j,y_j)$ the coordinates of the center of mass $C_j$ of the cell $D_j$, and $N_{jk}\equiv(x_{m_{jk}},y_{m_{jk}})$ the midpoint of the cell interface $e_{jk}$ having the vertices $N_{jk_{i}}$ $i=1,2$. 
\begin{figure}[ht!]
\centering{}
\begin{tikzpicture}[scale=2.]
\draw [dashed] (-6.1,	0.1) -- (-4.5,	0.1);
\draw [dashed] (-6.1,	0.1) -- (-8.,	0.071);
\draw [dashed] (-4.5,	0.1) -- (-2.5,	0.011);
\draw [dashed] (-6.1,	0.1) -- (-5.25,	1.426584774);
\draw [dashed](-6.1,	0.1) -- (-7.213525492,	0.881677878);
\draw [dashed] (-6.1,	0.1) -- (-7.213525492,	-0.881677878);
\draw [dashed] (-6.1,	0.1) -- (-5.25,	-1.426584774);
\draw [dashed](-4.5,	0.1) -- (-5.25,	1.426584774);
\draw [dashed](-4.5,	0.1) -- (-3.286474508,	0.881677878);
\draw [dashed](-4.5,	0.1) -- (-3.286474508,	-0.881677878);
\draw [dashed](-4.5,	0.1) -- (-5.25,	-1.426584774);
\draw  [dashed] (-5.25,	1.426584774) -- (-7.213525492,	0.881677878)--(-8.,	0.071)--(-7.213525492,	-0.881677878)-- (-5.25,	-1.426584774);
\draw [dashed] (-5.25,	1.426584774)--(-3.286474508,	0.881677878)--(-2.5,	0.011)--(-3.286474508,	-0.881677878)-- (-5.25,	-1.426584774);
\draw [blue]  (-5.283333333,	0.542194925)--(-6.187841831,	0.802754217)--(-7.2,	0.45)--(-7.2,	-0.42)--(-6.187841831,	-0.736087551)--(-5.283333333,	-0.408861591); 
\draw  [blue] (-3.4,	-0.4)--(-3.4,	0.4)--(-4.345491503,	0.802754217)--(-5.283333333,	0.542194925)--(-5.283333333,	-0.408861591)--(-4.345491503,	-0.736087551)--(-3.4,	-0.4);
\node(v0) at ( -6.1,	0.1)  {};
\fill[black] (v0) circle (0.7pt);
\draw ( -6.1,	0.05) node[anchor=north] {$N_{j}$ };

\node(v1) at (  -6.3,	0.0063661 ) {};
\fill [ blue] (v1) circle (0.7pt);
\draw  [ blue] ( -6.5,	0.1) node[anchor=north] {$C_{j}$ };

\node(v2) at ( -5.283333333,	0.066666667 ) {};
\fill[black] (v2) circle (0.7pt);
\draw ( -5.455,	0.15) node[anchor=north] {$N_{jk}$ };

\draw [-stealth] (-5.283333333,	0.25)--(-5.15,	0.25);
\draw (-5.1,0.5) node[anchor=north] {$\bm{n}_{jk}$ };

\node(v3) at (-5.283333333,	0.542194925 ) {};
\fill[black] (v3) circle (0.7pt);
\draw (-5.28,0.9) node[anchor=north] {$N_{jk_{2}}$ };

\node(v4) at (-5.283333333,	-0.408861591) {};
\fill[black] (v4) circle (0.7pt);
\draw (-5.28,-0.44) node[anchor=north] {$N_{jk_{1}}$ };

\node(v5) at (  -4.3,	0.007035752 ) {};
\fill[blue] (v5) circle (0.7pt);
\draw[blue] ( -3.999,0.1) node[anchor=north] {$C_{jk}$ };

\draw[blue]  ( -6.1,	0.6) node[anchor=north] {$D_{j}$ };

\draw[blue]  (-4.4,	0.6) node[anchor=north] {$D_{jk}$ };

\end{tikzpicture}
\caption{Sample of triangular grids (dashed lines)  and control volumes (solid lines).
\label{Fig1}}
\end{figure}
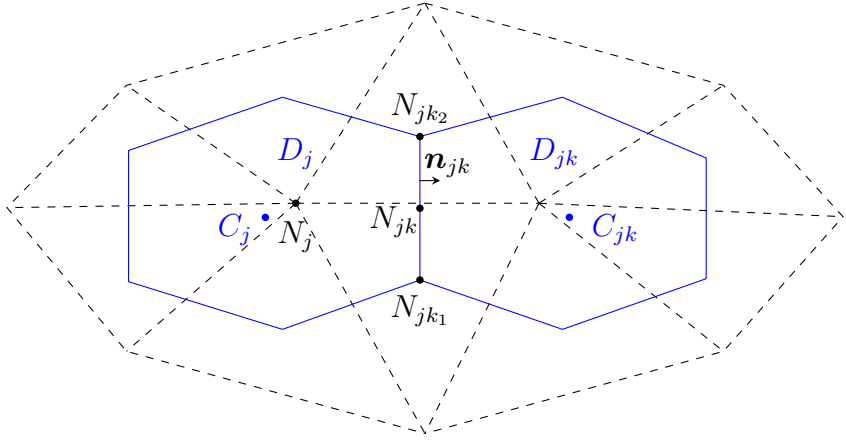

We consider the approximation of the cell average of the computed solution $\bar{\bm{U}}_j$ over the cell-vertex $D_j$:
$$\bar{\bm{U}}_j\approx\frac{1}{|D_j|}\int_{D_j}\bm{U}(x,y,t)\,dxdy.$$

The Jacobian of the system (\ref{Eq3})-(\ref{Eq4}) is:
\begin{equation}
\bm{J}_{jk}:=\dfrac{\partial}{\partial \bm{U}}\left(\left(\bm{F},\bm{G} \right)^T.\bm{n}_{jk}\right) =\cos(\theta_{jk})\dfrac{\partial \bm{F}}{\partial \bm{U}}+\sin(\theta_{jk})\dfrac{\partial \bm{G}}{\partial \bm{U}},
\end{equation}
with 
\begin{equation}
\dfrac{\partial \bm{F}}{\partial \bm{U}}=\left[\begin{array}{c  c c c}
0 \ & 1 \ & 0 \ & 0 \ \\ 
-u^2 +\tilde{c}^2 \ & 2u \ & 0 \ & 0 \ \\ 
-uv \ & v \ & u \ & 0 \ \\ 
-uc \ & c \ & 0 \ & u \
\end{array}  \right], \quad \quad\dfrac{\partial \bm{G}}{\partial \bm{U}}=\left[\begin{array}{c  c c c}
0 \ & 0 \ & 1 \ & 0 \ \\ 
 -uv \ & v \ & u \ & 0 \ \\ 
-v^2 +\tilde{c}^2 \ & v \ & u \ & 0 \ \\ 
-vc \ & 0 \ & c \ & v \
\end{array}  \right].
\end{equation}
This yields to  the following expression of the Jacobian matrix:
\begin{equation}
\bm{J}_{jk}=\left[\begin{array}{c  c c c}
0 \ & n_x \ & n_y \ & 0 \ \\ 
(\tilde{c}^2-u^2)n_x-vn_y\ & 2un_x+vn_y \ & un_y \ & 0 \ \\ 
-uvn_x+(\tilde{c}^2-v^2)n_y\ & vn_x \ & un_x+2vn_y \ & 0 \ \\ 
-ucn_x-vcn_y \ & cn_x \ & cn_y\ & un_x+vn_y \
\end{array}  \right],
\end{equation}
and its eigenvalues are given by:
\begin{equation}
\begin{aligned}
\lambda_{jk}^1=un_x+vn_y-\tilde{c}, \quad \lambda_{jk}^2=\lambda_{jk}^3=un_x+vn_y,\quad
\lambda_{jk}^4=un_x+vn_y+\tilde{c},
\end{aligned}
\end{equation}
where, $\tilde{c}=\sqrt{gh}$, $n_{x}=\cos(\theta_{jk})$, $n_{y}=\sin(\theta_{jk})$ and $u$, $v$ and $h$ are the approximate values of the velocity and water depth at the midpoint of the cell interface $e_{jk}$. 

The cell-vertex central-upwind scheme \cite{beljadid2016well} is applied to the coupled model of water flow and solute transport system (\ref{Eq1}), and its semi-discrete form is written as follows:
\begin{equation}
\begin{aligned}
\frac{d\,\bar{\bm{U}}_j}{dt}=-\frac{1}{|D_j|}\sum_{k=1}^{m_j}\frac{\ell_{jk}}{a^{\rm in}_{jk}+a^{\rm out}_{jk}}
&\biggl[\cos(\theta_{jk}) \left[a^{\rm in}_{jk}\bm{F}(\bm{U}_{jk}(N_{jk}),B_{jk})+a^{\rm out}_{jk}\bm{F}(\bm{U}_j(N_{jk}),B_{jk})\right]\\\\
&+\sin(\theta_{jk}) \left[a^{\rm in}_{jk}\bm{G}(\bm{U}_{jk}(N_{jk}),B_{jk})+a^{\rm out}_{jk}\bm{G}(\bm{U}_{j}(N_{jk}),B_{jk})\right]\\\\
&- a^{\rm in}_{jk}a^{\rm out}_{jk} \left[\bm{U}_{jk}(N_{jk})-\bm{U}_{j}(N_{jk})\right]\biggl]+\bar{\bm{S}}_j+\bar{\bm{I}}_j+\bar{\bm{T}}_j,
\end{aligned}
\quad \label{Eq5}
\end{equation}
where $\bar{\bm{S}}_j\approx\frac{1}{|D_j|}\int_{D_j}\bm{S}(\bm{U}(x,y,t),B(x,y))\,dxdy$, $\bar{\bm{I}}_j\approx\frac{1}{|D_j|}\int_{D_j}\bm{I}(x,y,t)\,dxdy$ and\\
 $\bar{\bm{T}}_j\approx\frac{1}{|D_j|}\int_{D_j}\bm{T}(x,y,t)\,dxdy$ are  the cell averages of the source terms. We denote by $B_{jk}:=B(N_{jk})$ the approximate values of the bottom topography at the midpoints $N_{jk}$, and $\bm{U}_j(N_{jk})$ and $\bm{U}_{jk}(N_{jk})$ correspond respectively to the left and right reconstructed values of the vector variable $\bm{U}$ at the midpoint $N_{jk}$ of the cell interface $e_{jk}$.\\
The extreme values of speeds $a_{jk}^{in}$ and  $a_{jk}^{out}$ are  respectively the absolute values of the smallest and largest eigenvalues of the Jacobian of the system
\begin{equation*}
\begin{aligned}
&a_{jk}^{\rm in}=-\min\big\{\lambda_1[\bm{J}_{jk}(\bm{U}_j(N_{jk}))],\lambda_1[\bm{J}_{jk}(\bm{U}_{jk}(N_{jk}))],0\big\},\\
&a_{jk}^{\rm out}=\max\big\{\lambda_4[\bm{J}_{jk}(\bm{U}_j(N_{jk}))],\lambda_4[\bm{J}_{jk}(\bm{U}_{jk}(N_{jk}))],0\big\},
\end{aligned}
\end{equation*}
which can be expressed as follows:
\begin{equation}
\begin{aligned}
&a_{jk}^{\rm in}=-\min\big\{u_j^{\theta}(N_{jk})-\sqrt{gh_j(N_{jk})},u_{jk}^{\theta}(N_{jk})-\sqrt{gh_{jk}(N_{jk})},0\big\},\\
&a_{jk}^{\rm out}=\max\big\{u_j^{\theta}(N_{jk})+\sqrt{gh_j(N_{jk})},u_{jk}^{\theta}(N_{jk})+\sqrt{gh_{jk}(N_{jk})},0\big\},
\end{aligned}
\label{Eq6}
\end{equation}
where
\begin{equation}
\begin{aligned}
u^\theta_{jk}(N_{jk})&:=\cos(\theta_{jk})u_{jk}(N_{jk})+\sin(\theta_{jk})v_{jk}(N_{jk}),\\
u^\theta_j(N_{jk})&:=\cos(\theta_{jk})u_j(N_{jk})+\sin(\theta_{jk})v_j(N_{jk}).
\end{aligned}
\label{1}
\end{equation}
While the central upwind scheme \citep{beljadid2016well} performs well for modeling SWEs, its extension (\ref{Eq5}) leads to numerical diffusion for the scalar transport equation. As will be shown in our numerical experiments, this is mainly due to the last term $a^{\rm in}_{jk}a^{\rm out}_{jk}\left[\bm{U}_{jk}(N_{jk})-\bm{U}_{j}(N_{jk})\right]$, which has a role to adjust the dissipation in the original scheme but its magnitude leads to numerical diffusion for the scalar concentration for the scheme (\ref{Eq5}).
\subsection{The semi-discrete form of the proposed scheme}\label{S4}
In this section, we introduce the proposed scheme for the coupled SWEs and scalar transport equation. First, we consider the following class of semi-discrete finite volume schemes:   
\begin{equation}
\begin{aligned}
\frac{d\,\bar{\bm{U}}_j^{(i)}}{dt}=-\frac{1}{|D_j|}\sum_{k=1}^{m_j}\frac{\ell_{jk}}{a^{\rm in}_{jk}+a^{\rm out}_{jk}}
&\biggl[\cos(\theta_{jk}) \left[a^{\rm in}_{jk}\bm{F}^{(i)}(\bm{U}_{jk}(N_{jk}),B_{jk})+a^{\rm out}_{jk}\bm{F}^{(i)}(\bm{U}_j(N_{jk}),B_{jk})\right]\\\\
&+\sin(\theta_{jk}) \left[a^{\rm in}_{jk}\bm{G}^{(i)}(\bm{U}_{jk}(N_{jk}),B_{jk})+a^{\rm out}_{jk}\bm{G}^{(i)}(\bm{U}_{j}(N_{jk}),B_{jk})\right]\\\\
&-\delta_{jk} \left[\bm{U}_{jk}^{(i)}(N_{jk})-\bm{U}_{j}^{(i)}(N_{jk})\right]\biggl]+\bar{\bm{S}}_j^{(i)}+\bar{\bm{I}}_j^{(i)}+\bar{\bm{T}}_j^{(i)},\quad i=1, 4,
\end{aligned}
\quad \label{Eq05}
\end{equation}
where $\delta_{jk}$ is a symmetric parameter depending on the variables of our system at the cell $D_{j}$ and the neighboring cell $D_{jk}$ to guarantee the conservation of the method. 

The parameter $\delta_{jk}=a^{\rm in}_{jk}a^{\rm out}_{jk}$ leads to the original central-upwind scheme \citep{beljadid2016well,kurganov2007s,bryson2011well}. Our aim is to propose a numerical scheme (\ref{Eq05}) which has an advantage in reducing 
the numerical dissipation of the original central-upwind scheme \citep{kurganov2007reduction}, especially for the scalar concentration. The main idea is to reduce the last term $a^{\rm in}_{jk}a^{\rm out}_{jk}\left[\bm{U}_{jk}(N_{jk})-\bm{U}_{j}(N_{jk})\right]$ which is responsible for the numerical dissipation of the original central-upwind scheme (\ref{Eq5}) for the scalar transport equation. To this end, we propose a new term $\delta_{jk}\left[\bm{U}_{jk}(N_{jk})-\bm{U}_{j}(N_{jk})\right]$, by introducing a symmetric parameter $\delta_{jk}$ smaller than $a^{\rm in}_{jk}a^{\rm out}_{jk}$ in the semi-discrete form of the proposed scheme. 
While the numerical dissipation decreases by decreasing $\delta_{jk}$, appropriate values will be proposed to avoid numerical oscillations and to ensure the underlying physical properties of our method such as the positivity properties of the water depth and the scalar concentration, and  the discrete maximum-minimum 
principle for the concentration of the pollutant. For this purpose, in our approach, first we consider the same values of the parameter $\delta_{jk}$ for both the continuity equation and the scalar transport equation where $\delta_{jk} \leqslant a^{\rm in}_{jk} a^{\rm out}_{jk}$. According to the methodology developed for central-upwind schemes \citep{beljadid2016well,kurganov2007s,bryson2011well}, to guarantee the positivity of both water depth and concentration of our system, $\delta_{jk}$ should satisfy the following conditions:
\begin{equation*}
\delta_{jk}\geq u_{jk}^{\theta}a^{\rm in}_{jk},\quad \text{and} \quad \delta_{jk}\geq -u_{j}^{\theta}a^{\rm out}_{jk}.
\end{equation*}
 Therefore, the optimum parameter which guarantees the positivity of the water depth and scalar concentration, is given by:
 \begin{equation}
\delta_{jk}^+=\max \left\lbrace u_{jk}^{\theta}a^{\rm in}_{jk},-u_{j}^{\theta}a^{\rm out}_{jk},0\right\rbrace.
\label{dlt}
\end{equation} 
We note that, the selected parameter $\delta_{jk}^+$ can reduce the numerical dissipation of the scheme, but may lead to spurious (unphysical) oscillations. We propose to use a linear combination of $\delta_{jk}^+$ and $a^{\rm in}_{jk}a^{\rm out}_{jk}$ which will be confirmed by our numerical experiments as an appropriate choice for the proposed scheme.
\begin{equation}
\delta_{jk}=\nu \delta_{jk}^+  +(1-\nu)a^{\rm in}_{jk}a^{\rm out}_{jk},
\end{equation}
where $\nu \in [0,1]$. 

 The parameters $\delta_{jk}$ satisfy also the following properties, that we will need to prove both the positivity of water depth and scalar concentration. Following Eq. (\ref{dlt}), we have:
\begin{equation}
\nu\delta_{jk}^+\geq \nu a_{jk}^{\rm in}u^\theta_{jk}(N_{jk})\quad \text{and}\quad \nu \delta_{jk}^+\geq -\nu  a_{jk}^{\rm out}u^\theta_{j}(N_{jk}). 
\label{p1}
\end{equation}
Using Eq. (\ref{Eq6}) related to the extreme values of speeds, we obtain:
 $$
 a^{\rm in}_{jk}a^{\rm out}_{jk}\geq a_{jk}^{\rm in}u^\theta_{jk}(N_{jk}) \quad \text{and}\quad a^{\rm in}_{jk}a^{\rm out}_{jk}\geq -a_{jk}^{\rm out}u^\theta_{j}(N_{jk}), 
$$
then
\begin{equation}
(1-\nu)a^{\rm in}_{jk}a^{\rm out}_{jk}\geq (1-\nu) a_{jk}^{\rm in}u^\theta_{jk}(N_{jk}) \quad \text{and}\quad (1-\nu)a^{\rm in}_{jk}a^{\rm out}_{jk}\geq -(1-\nu)a_{jk}^{\rm out}u^\theta_{j}(N_{jk}). 
\label{p2}
\end{equation}
By combining (\ref{p1}) and (\ref{p2}), we get:
\begin{equation}
 \delta_{jk}\geq a_{jk}^{\rm in}u^\theta_{jk}(N_{jk}),\quad \text{and}  \quad \delta_{jk}\geq- a_{jk}^{\rm out}u^\theta_{j}(N_{jk}).
 \label{delta}
\end{equation}


\section{Reconstruction of the hydrodynamic variables} \label{S5}
\subsection{ Minmod reconstruction}\label{S51}

For the bottom topography, we consider the continuous piecewise linear approximation introduced in \cite{beljadid2016well} where it is assumed that the topography is initially known at the vertices of each cell-vertex. Following the procedure developed in \cite{beljadid2016well}, the approximate values of  the bottom topography $B_{jk}:=B(N_{jk})$ at the midpoints $N_{jk}$ are obtained from the topography at the vertices $N_{jk_{1}}$ and $N_{jk_{2}}$ as shown in Figure \ref{Fig1} by:
\begin{equation}
B_{jk}=\dfrac{B(N_{jk_{1}})+B(N_{jk_{2}})}{2},
\end{equation}
and the topography at the center of mass $C_j$ is  reconstructed by \cite{beljadid2016well}:
\begin{equation}
\quad B_j=\sum_{k=1}^{m_j}\mu_kB_{jk},
\end{equation}
where $\mu_k:=\mathcal A_{jk}/|D_j|$ and $\mathcal A_{jk}$ is the area of the triangle $C_jN_{jk_1}N_{jk_2}$. 

The reconstruction of the topography is the union of $m_{j}$ planes where each plane is defined by the connection of the altitudes of the three points $C_j$, $N_{jk_1}$, and $N_{jk_2}$. For the coupled model of water flow and solute transport (\ref{Eq1}), we extend the procedure developed in \cite{beljadid2016well} to reconstruct the values $\bm{U}_j(N_{jk})$ and $\bm{U}_{jk}(N_{jk})$ at the midpoints of the cell interface $e_{jk}$. The numerical gradient $\nabla \bm{W}_j$ at each cell-vertex is computed using the procedure developed in \cite{beljadid2016well} to obtain the piecewise linear reconstruction for $\bm{W}:=(w,p,q)^T$
\begin{equation}
\bm{W}_j(x,y):=\bar{\bm{W}}_j+(\bm{W}_x)_j(x-x_j)+(\bm{W}_y)_j(y-y_j).
\quad\label{Eq8}
\end{equation}
The reconstruction (\ref{Eq8}) is modified for the  water surface elevation  $w$ by introducing the parameter $\alpha\in [0,1]$ to respect the positivity of the water depth \cite{beljadid2016well}
\begin{equation}
w_j(x,y):=\bar{w}_j+\alpha (\nabla w)_j(x-x_j,y-y_j),
\quad\label{Eq9}
\end{equation}
where we consider the maximum value of the parameter $\alpha$ such that the computed values of the water surface elevation at the cell vertices, obtained from (\ref{Eq9}) satisfy ${w}_{jk_1}\geq B_{jk_1}$. The reconstruction of the water surface elevation and water depth satisfies the following relationships:
\begin{equation}
\begin{aligned}
\bar{w}_j=\sum_{k=1}^{m_j}\mu_kw_j(N_{jk}),\quad\bar{h}_j=\sum_{k=1}^{m_j}\mu_kh_j(N_{jk}).
\end{aligned}
\label{hj}
\end{equation}
\begin{remark}
 The computed values of velocities can be obtained using $u:=p/h$ and $v:=q/h$, respectively. We apply the following desingularization formula \cite{kurganov2007s} to avoid division by very small values of the water depth.
 
\begin{equation}
{u}=\frac{\sqrt{2}\,{h}\,{p}}{\sqrt{{h}^4+max({h}^4,\varepsilon})},\quad {v}=\frac{\sqrt{2}\,{h}\,{q}}{\sqrt{{h}^4+max(h^4,\varepsilon})},
\label{Rg}
\end{equation}
where in our simulations $\epsilon=\max\lbrace \mid D_j\mid^2 \rbrace$.
\end{remark}
The new well-balanced positivity preserving reconstruction developed in this study  for the scalar concentration will be presented in Section \ref{S7}.

\subsection{ The positivity property the water depth}
We apply the forward Euler temporal discretization to the semi-discrete form of the proposed scheme (\ref{Eq05}) to obtain the following explicit form for the water depth:
\begin{equation}
\begin{aligned}
\bar{w}^{n+1}_j=\bar{w}^n_j&-\frac{\Delta t}{|D_j|}\sum_{k=1}^{m_j}\frac{\ell_{jk}\cos(\theta_{jk})}{a^{\rm in}_{jk}+a^{\rm out}_{jk}}
\left[a^{\rm in}_{jk}\left(hu\right)_{jk}(N_{jk})+a^{\rm out}_{jk}\left(hu\right)_j(N_{jk})\right]\\
&-\frac{\Delta t}{|D_j|} \sum_{k=1}^{m_j}\frac{\ell_{jk}\sin(\theta_{jk})}{a^{\rm in}_{jk}+a^{\rm out}_{jk}}
\left[a^{\rm in}_{jk}\left(hv\right)_{jk}(N_{jk})+a^{\rm out}_{jk}\left(hv\right)_j(N_{jk})\right]\\
&+\frac{\Delta t}{|D_j|}\sum_{k=1}^{m_j}\ell_{jk}\frac{\delta_{jk}}{a^{\rm in}_{jk}+a^{\rm out}_{jk}}
\left[w_{jk}(N_{jk})-w_j(N_{jk})\right].
\end{aligned}
\qquad\label{Eq020}
\end{equation}
Due to (\ref{hj}), and since the piecewise linear reconstruction of the bottom topography is continuous, we have $w_{jk}(N_{jk})-w_{j}(N_{jk})=h_{jk}(N_{jk})-h_{j}(N_{jk})$ and Eq.(\ref{Eq020}) can be rewritten in the following form:
\begin{equation}
\begin{aligned}
\bar{h}_j^{n+1}=&\sum_{k=1}^{m_j}h_j(N_{jk})\Big[\mu_k-\frac{\Delta t \ell_{jk}}{|D_j| \left( a^{\rm in}_{jk}+a^{\rm out}_{jk}\right)}
\left[\delta_{jk}+a^{\rm out}_{jk}u^\theta_j(N_{jk})\right]\Big]\\
&+\frac{\Delta t}{|D_j|}\sum_{k=1}^{m_j}h_{jk}(N_{jk})\,\frac{\ell_{jk}}{a_{jk}^{\rm in}+a_{jk}^{\rm out}}
\left[\delta_{jk}-a_{jk}^{\rm in}u^\theta_{jk}(N_{jk})\right],
\end{aligned}
\qquad\label{Eq021}
\end{equation}
where $u^\theta_{j}(N_{jk})$ and $u^\theta_{jk}(N_{jk})$ are given by Eq. (\ref{1}).\\
The positivity is achieved by using $\delta_{jk}\geq a_{jk}^{\rm in} u^\theta_{jk}(N_{jk}) $ as already mentioned in Eq. (\ref{delta}) and $h_{jk}(N_{jk})\ge0$. The last term in (\ref{Eq021}) is positive and since $h_j(N_{jk})\ge0$ and
$$
\frac{\Delta t \ell_{jk}}{|D_j| \left( a^{\rm in}_{jk}+a^{\rm out}_{jk}\right)}
\left[\delta_{jk}+a_{jk}^{\rm out}u^\theta_{j}(N_{jk})\right]\le\frac{\Delta t}{|D_j|}\,\ell_{jk}a^{\rm out}_{jk},
$$
then, the first term will be also positive  under the following condition:
\begin{equation}
\begin{aligned}
\Delta t & \le\frac{\mu_k|D_j|}{\ell_{jk}a^{\rm out}_{jk}}.\\
\end{aligned}
\label{Eq22a}
\end{equation}
This condition is valid under the following time step restriction:
\begin{equation}
\begin{aligned}
\Delta t_h & \le\frac{d_m}{2a},
\end{aligned}
\label{Eq22a}
\end{equation}
where $a:=\max_{j,k}\{a^{\rm in}_{jk},a^{\rm out}_{jk}\}$ and $d_{m}:=\min_{j,k} \lbrace d_{jk}\rbrace$, with $d_{jk}$ is the distance between the center of mass $C_j$ of the cell $D_j$ and its $k$th interface $e_{jk}$.
\section{Discretization  of the source terms}\label{S6}
\subsection{Discretization of the bottom topography and the  friction term} 
The nonzero components of the bottom topography $\bar{\bm{S}}_j$ are approximated using the following well-balanced discretizations \cite{beljadid2016well, bryson2011well}:
\begin{equation}
\begin{aligned}
\bar{S}_j^{(2)}=&\frac{g}{2|D_j|}\sum_{k=1}^{m_j}\ell_{jk}(w_j(N_{jk})-B_{jk})^2\cos(\theta_{jk})-g(w_x)_j(\bar{w}_j-B_j),&\\
\bar{S}_j^{(3)}=&\frac{g}{2|D_j|}\sum_{k=1}^{m_j}\ell_{jk}(w_j(N_{jk})-B_{jk})^2\sin(\theta_{jk})-g(w_y)_j(\bar{w}_j-B_j).&
\end{aligned}
\label{Eq11}
\end{equation}
 For the discretization of the friction source term $\bar{{I}}_j$, we use the following semi-implicit scheme:
\begin{equation}
 \bar{{I}}_j=\frac{1}{|D_j|}\int\limits_{D_j} I(x,y,t)\,dxdy=-\frac{1}{|D_j|}\left(g\frac{n_f^{2}\sqrt{\bar{u}_j^2+\bar{v}_j^2}}{\bar{h}_{j}^{4/3}}\right)^{n}\left( \begin{array}{c}
 0\\ 
\dfrac{\bar{p}_j^{n}+\bar{p}_j^{n+1}}{2} \\ 
 \dfrac{\bar{q}_j^{n}+\bar{q}_j^{n+1}}{2} \\ 
 0
 \end{array} \right).
 \label{I}
\end{equation}
According to (\ref{Rg}), Eq. (\ref{I}) becomes:
\begin{equation}
 \bar{{I}}_j=-\frac{1}{|D_j|}\left(g\frac{2n_f^{2}\bar{h}_{j}^{5/3}\sqrt{\bar{p}_j^2+\bar{q}_j^2}}{\bar{h}_{j}^4+max(\bar{h}_{j}^4,\varepsilon)}\right)^{n}\left( \begin{array}{c}
 0\\ 
 \dfrac{\bar{p}_j^{n}+\bar{p}_j^{n+1}}{2} \\ 
 \dfrac{\bar{q}_j^{n}+\bar{q}_j^{n+1}}{2} \\ 
 0
 \end{array} \right).
\end{equation}
The discretization quadrature (\ref{Eq11}) is well-balanced in the sense that it exactly preserves the steady-state solution of ``\emph{lake at rest}'', since under the conditions $w=constant$, $u=0$ and $v=0$, the friction source term becomes zero $I=0$ and it  has no impact on the well-balanced property. 
\subsection{ Discretization of the diffusion term}
The diffusion term in the scalar transport equation is discretized  using the following  approach  \citep{murillo2006conservative,murillo2008analysis,murillo2005coupling}:
\begin{equation*}
\begin{aligned}
 \bar{{T}}^{(4)}_j&=\frac{1}{|D_j|}\int\limits_{D_j} T^{(4)}(x,y,t)\,dxdy,\\&=\frac{1}{|D_j|}\int\limits_{D_j} \nabla.(\gamma h\nabla c)\,dxdy.
 \end{aligned}
\end{equation*}
By applying the divergence theorem we obtain the following expression for the diffusion term where the notation $(.)_f{_{jk}}$ is used for the estimated values at interfaces:
\begin{equation}\label{D}
\begin{aligned}
 \bar{{T}}^{(4)}_j&=\frac{1}{|D_j|}\sum_{k=1}^{m_j} \ell_{jk}(\gamma h\nabla c)_f{_{jk}}.\bm{n}_{jk},\\
 &=\frac{1}{|D_j|}\sum_{k=1}^{m_j} \ell_{jk} \gamma\tilde{h}_{jk}\left( \dfrac{\bar{c}_{jk}-\bar{c}_j}{\bar{d}_{jk}} \right),
 \end{aligned}
\end{equation}
with $\tilde{h}_{jk}$ is the value of  water depth at the cell interface $e_{jk}$ and $\bar{d}_{jk}$ is the distance  between the projections of the centers of mass $C_{j}$ and $C_{jk}$ on the line with  direction vector $\bm{n}_{jk}$ \citep{murillo2008analysis}, given by:
\begin{equation}
\bar{d}_{jk}= \overrightarrow{C_j C_{jk}}\cdot \bm{n}_{jk}.
\end{equation}
In order to avoid diffusion in wet/dry interfaces the water depth at the cell interface $\tilde{h}_{jk}$ is approximated as \citep{murillo2005coupling}:
\begin{equation}
\tilde{h}_{jk}=\min\lbrace h_j,h_{jk}\rbrace ,
\end{equation}
where $h_j$  and $h_{jk}$ are respectively the left and right reconstructed values of the water depth  at the midpoint $N_{jk}$ of the cell interface $e_{jk}$. 
\section{Well-balanced and positivity of the concentration }\label{S7}
In this section, we propose a new reconstruction for the scalar concentration based on the following three physical properties:  $(i)$ for a passive scalar, constant-concentration state should be preserved in space and time for any 
hydrodynamic field of the flow in the absence of source terms in the scalar transport equation; $(ii)$ the scalar concentration should remain positive at all times; $(iii)$ it should satisfy the discrete maximum-minimum principle.

\subsection{Positivity  preserving  reconstruction for the concentration}
In our approach, the proposed reconstruction will be based on the conservative variable $s=hc$, and we will require the following properties: $(\bm{\pi }_1)$ the reconstruction should satisfy similar relationships $(\ref{hj})$ for the water depth and water surface elevation, 
$\bar{s}_j=\sum_{k=1}^{m_j}\mu_ks_j(N_{jk})$; $(\bm{\pi}_2)$ the reconstruction should satisfy: $\bar{s}_j:= \bar{h}_j \bar{c}_j$ , $s_j:=h_jc_j$ and $s_{jk}:=h_{jk}c_{jk}$; 
$(\bm{\pi}_3)$ for each cell interface $e_{jk}$, the reconstructed values $s_{j}=h_{j}c_{j}$ and $s_{jk}=h_{jk}c_{jk}$ are functions of the cell averages $\bar{c}_{j}$ and $\bar{c}_{j1}$,$\bar{c}_{j2},\ldots,\bar{c}_{jm_j}$ at the cell $D_{j}$ and its  neighboring cells $D_{j1}$, $D_{j2},\ldots,D_{jm_j}$, 
respectively. 
\begin{equation*}
\begin{aligned}
s_j&=\psi\left(\bar{s}_j,\bar{s}_{j1}, \bar{s}_{j2},\ldots,\bar{s}_{jm_j} \right),&\\
&=\psi\left(\bar{c}_j\bar{h}_j,\bar{c}_{j1}\bar{h}_{j1}, \bar{c}_{j2}\bar{h}_{j2},\ldots,\bar{c}_{jm_j}\bar{h}_{jm_j} \right),&
\end{aligned}
\end{equation*}
where the function $\psi$ should satisfy the condition: if $\bar{c}_{j}\equiv\bar{c}_{j1}\equiv\bar{c}_{j2}\equiv\ldots\equiv\bar{c}_{jm_j}\equiv constant$, then $c_j\equiv c_{jk}\equiv constant$, for any hydrodynamic field of the flow. \\
To develop a reconstruction which respects the aforementioned properties, we start by rewriting the conservative variable $s_j$ using the following expressions:
\begin{equation}
\begin{aligned}
s_j&=c_j h_j=\left(\bar{c}_j+\delta_c \right)h_j,&\\
 & =\bar{c}_jh_j+\delta_c h_j=\bar{c}_jh_j+\delta_c \left(\bar{h}_j+\delta_h\right),&\\
& =\bar{c}_jh_j +\delta_c \bar{h}_j+\delta_c  \delta_h,&\\
\end{aligned}
\label{Eq12}
\end{equation}
where $\delta_c$ is computed using the gradient of $c$, $\delta_c =(\nabla c)_j\cdot\overrightarrow{X_{jk}}$, with $\overrightarrow{X_{jk}}=\overrightarrow{C_jN_{jk}}$. \\
A linear reconstruction based on the conservative variable $s$ may cause problem for the well-balanced property of the concentration since the water depth can be nonlinear. The linear reconstruction based on the concentration may cause problem to 
respect the equality of the cell average of the computed solution of the conservative variable $\bar{s}_j=\frac{1}{|D_j|}\int_{D_j}s(x,y,t)\,dxdy$ used in the finite volume method framework. To remedy to these situations, we choose the following reconstruction based on the first two terms of the last equation (\ref{Eq12}):
\begin{equation*}
\begin{aligned}
s_j =\bar{c}_jh_j +\delta_c \bar{h}_j,
\end{aligned}
\label{Eq13}
\end{equation*}
which leads to the following expression for the conservative variable $s$ at the midpoints of cell interfaces:
\begin{equation*}
\begin{aligned}
s_j(N_{jk}) =\bar{c}_jh_j(N_{jk}) +\delta_c \bar{h}_j.
\end{aligned}
\label{Eq14}
\end{equation*}
In order to guarantee the positivity of $s_j$ we choose the largest parameter  $\beta_{\max} \in[0,1]$,  so that $s_j(N_{jk})\geq 0$
\begin{equation}
\begin{aligned}
s_j(N_{jk}) =\bar{c}_jh_j(N_{jk}) +\beta_{\max} \bar{h}_j(\nabla c)_j\cdot \overrightarrow{X_{jk}}.
\end{aligned}
\label{Eq15}
\end{equation}
To compute $(\nabla c)_j$, we apply the same procedure used for $\bm W$ as explained in Section \ref{S51}.
\begin{remark}
For reason of convexity, the positivity of the concentration is achieved throughout computational cells $D_j$, by using Eq. (\ref{Eq15}) at the vertices instead of the midpoints of cell interfaces 
\begin{equation}
\begin{aligned}
s_j(N_{jk_1}) =\bar{c}_jh_j(N_{jk_1}) +\beta_{\max} \bar{h}_j(\nabla c)_j\cdot \overrightarrow{X_{jk_1}},
\end{aligned}
\label{Eq16}
\end{equation}
and imposing the positivity of $s_j(N_{jk_1})$ at all cell vertices to deduce the value of $\beta_{\max}$. 
 \end{remark}
With the proposed  positivity preserving  reconstruction, the properties $\bm{\pi}_1$, $\bm{\pi}_2$, and $\bm{\pi}_3$ can be justified as follows:\\
According to Eq. (\ref{Eq15}), we have:
\begin{equation*}
\begin{aligned}
s_j(N_{jk}) =\bar{c}_jh_j(N_{jk}) +\beta_{\max} \bar{h}_j(\nabla c)_j\cdot \overrightarrow{X_{jk}},
\end{aligned}
\end{equation*}
since
\begin{equation*}
\begin{aligned}
\bar{h}_j=\sum_{k=1}^{m_j}\mu_kh_j(N_{jk}),  \quad \sum_{k=1}^{m_j}\mu_k\overrightarrow{X_{jk}}=0,
\end{aligned}
\end{equation*}
then 
 \begin{equation*}
\begin{aligned}
\sum_{k=1}^{m_j}\mu_ks_j(N_{jk})=\bar{c}_j\bar{h}_j=\bar{s}_j.
\end{aligned}
\end{equation*}
Since we compute the values of the concentration from the values of $s$ and $h$, then our reconstruction satisfies $\bar{s}_j:=\bar{c}_j \bar{h}_j $, $s_j=c_jh_j$ and $s_{jk}=c_{jk}h_{jk}$. \\
If the discrete concentration is constant $\bar{c}_j=c_0$, then $(\nabla c)_j=0$ and Eq. (\ref{Eq15}) reduces to $s_j(N_{jk}) =\bar{c}_jh_j(N_{jk}) = c_0h_j(N_{jk})=c_jh_j(N_{jk})$ and we get $c_j=c_0$. 
\subsection{Well-balanced of the concentration}
Here, we will prove that in the absence of sources terms in the scalar transport equation, the constant-concentration states are preserved in space and time for any hydrodynamic field of the flow. We apply the forward Euler temporal discretization to the semi-discrete form of the proposed scheme (\ref{Eq05}) for the scalar transport equation to obtain the following explicit discretization: 
\begin{equation}
\begin{aligned}
\bar{s}^{n+1}_j=\bar{s}^n_j&-\frac{\Delta t}{|D_j|}\sum_{k=1}^{m_j}\frac{\ell_{jk}\cos(\theta_{jk})}{a^{\rm in}_{jk}+a^{\rm out}_{jk}}
\left[a^{\rm in}_{jk}\left(\frac{ps}{h}\right)_{jk}(N_{jk})+a^{\rm out}_{jk}\left(\frac{ps}{h}\right)_j(N_{jk})\right]\\
&-\frac{\Delta t}{|D_j|}\sum_{k=1}^{m_j}\frac{\ell_{jk}\sin(\theta_{jk})}{a^{\rm in}_{jk}+a^{\rm out}_{jk}}
\left[a^{\rm in}_{jk}\left(\frac{qs}{h}\right)_{jk}(N_{jk})+a^{\rm out}_{jk}\left(\frac{qs}{h}\right)_j(N_{jk})\right]\\
&+\frac{\Delta t}{|D_j|}\sum_{k=1}^{m_j}\ell_{jk}\frac{\delta_{jk}}{a^{\rm in}_{jk}+a^{\rm out}_{jk}}
\left[s_{jk}(N_{jk})-s_j(N_{jk})\right]+ \bar{{T}}^{(4)}_j.
\end{aligned}
\qquad\label{Eq17}
\end{equation}
We assume that $\bar{c}^n=c_0$ at time $t=t^n$, where $c_0$ is a constant.  Under this condition, $\bar{c}_{jk}=\bar{c}_{j}$, in Eq. (\ref{D}), the discrete diffusion term becomes zero $ \bar {{T}}^{(4)}_j = 0 $.\\ 
Finally, by using  the property $\bm{\pi}_3$ we obtain:
\begin{equation}
\begin{aligned}
\bar{s}^{n+1}_j=\bar{s}^n_j&-c_0\biggl[\frac{\Delta t }{|D_j|}\sum_{k=1}^{m_j}\frac{\ell_{jk}\cos(\theta_{jk})}{a^{\rm in}_{jk}+a^{\rm out}_{jk}}
\left[a^{\rm in}_{jk}(hu)_{jk}(N_{jk})+a^{\rm out}_{jk}(hu)_j(N_{jk})\right]\\
&-\frac{\Delta t }{|D_j|}\sum_{k=1}^{m_j}\frac{\ell_{jk}\sin(\theta_{jk})}{a^{\rm in}_{jk}+a^{\rm out}_{jk}}
\left[a^{\rm in}_{jk}(hv)_{jk}(N_{jk})+a^{\rm out}_{jk}(hv)_j(N_{jk})\right]\\&+\frac{\Delta t }{|D_j|}\sum_{k=1}^{m_j}\ell_{jk}\frac{\delta_{jk}}{a^{\rm in}_{jk}+a^{\rm out}_{jk}}
\left[h_{jk}(N_{jk})-h_j(N_{jk})\right]\Big. \dfrac{}{}\biggl],           \\
\quad=c_0\bar{h}&_j^n+c_0\left[\bar{h}_j^{n+1}-\bar{h}_j^{n}\right],\\
\quad=c_0\bar{h}&_j^{n+1}.
\end{aligned}
\qquad\label{Eq19}
\end{equation}
Since $\bar{s} _j ^ {n +1}:= \bar {c} _j ^ {n + 1} \bar{h} _j ^{n +1} = c_0 \bar{h} _j ^ {n + 1} $, then $ \bar{c} _j ^ {n + 1} = c_0 $ and this shows the well-balanced of the concentration for any hydrodynamic
field of the flow.
 \subsection{ The positivity property of the concentration}
In this section we will prove the positivity of the concentration for our system where we consider both the advection and the diffusion effects in the scalar transport equation. The explicit form of the proposed scheme (\ref{Eq05}) applied to the scalar transport equation using the forward Euler temporal discretization yields:
\begin{equation}
\begin{aligned}
\bar{s}^{n+1}_j=\bar{s}^n_j&-\frac{\Delta t }{|D_j|}\sum_{k=1}^{m_j}\frac{\ell_{jk}\cos(\theta_{jk})}{a^{\rm in}_{jk}+a^{\rm out}_{jk}}
\left[ a^{\rm in}_{jk}(us)_{jk}(N_{jk})+a^{\rm out}_{jk}(us)_j(N_{jk})\right]\\
&-\frac{\Delta t }{|D_j|}\sum_{k=1}^{m_j}\frac{\ell_{jk}\sin(\theta_{jk})}{a^{\rm in}_{jk}+a^{\rm out}_{jk}}
\left[a^{\rm in}_{jk}(vs)_{jk}(N_{jk})+a^{\rm out}_{jk}(vs)_j(N_{jk})\right]\\
&+\frac{\Delta t}{|D_j|}\sum_{k=1}^{m_j}\ell_{jk}\frac{\delta_{jk}}{a^{\rm in}_{jk}+a^{\rm out}_{jk}}
\left[s_{jk}(N_{jk})-s_j(N_{jk})\right]
\\
&+
\frac{\Delta t}{|D_j|}\sum_{k=1}^{m_j} \ell_{jk} \gamma\tilde{h}_{jk} \left( \dfrac{\bar{c}_{jk}-\bar{c}_j}{\bar{d}_{jk}} \right),
\end{aligned}
\qquad\label{Eq20}
\end{equation}
which can be rewritten as:
\begin{equation}
\begin{aligned}
\bar{s}^{n+1}_j=\bar{s}^n_j & -\frac{\Delta t}{|D_j|}\sum_{k=1}^{m_j} \ell_{jk} \gamma\tilde{h}_{jk} \dfrac{\bar{c}_j}{\bar{d}_{jk}} -\frac{\Delta t}{|D_j|}\sum_{k=1}^{m_j}s_j(N_{jk})\frac{ \ell_{jk}}{ \left( a^{\rm in}_{jk}+a^{\rm out}_{jk}\right)}
\left[\delta_{jk}+a^{\rm out}_{jk}u^\theta_j(N_{jk})\right]\\
&+\frac{\Delta t}{|D_j|}\sum_{k=1}^{m_j}\ell_{jk}\Big[\frac{s_{jk}(N_{jk})}{a_{jk}^{\rm in}+a_{jk}^{\rm out}}
\left[\delta_{jk}-a_{jk}^{\rm in}u^\theta_{jk}(N_{jk})\right]+  \gamma\tilde{h}_{jk}  \dfrac{\bar{c}_{jk}}{\bar{d}_{jk}}\Big].
\end{aligned}
\qquad\label{E21}
\end{equation}
We have, $\tilde{h}_{jk}\bar{c}_j=\min\lbrace h_j,h_{jk}\rbrace\bar{c}_j \leq h_j\bar{c}_j, \quad \text{and} \quad \bar{h}_j=\sum_{k=1}^{m_j}\mu_k h_j(N_{jk})\geq \mu_k h_j(N_{jk}), $ for any $k$, $1\leq k\leq m_j$, then, $\tilde{h}_{jk}\bar{c}_j\leq\mu_k^{-1}\bar{h}_j\bar{c}_j=\mu_k^{-1}\bar{s}_j.$\\ According to Eq.(\ref{E21}) we deduce:
\begin{equation}
\begin{aligned}
\bar{s}^{n+1}_j\geq \bar{s}^n_j &\left(1 - {\Delta t}\lambda_j \gamma \right) -\frac{\Delta t}{|D_j|}\sum_{k=1}^{m_j}s_j(N_{jk})\frac{ \ell_{jk}}{ \left( a^{\rm in}_{jk}+a^{\rm out}_{jk}\right)}
\left[\delta_{jk}+a^{\rm out}_{jk}u^\theta_j(N_{jk})\right]\\
&+\frac{\Delta t}{|D_j|}\sum_{k=1}^{m_j}\ell_{jk}\Big[\frac{s_{jk}(N_{jk})}{a_{jk}^{\rm in}+a_{jk}^{\rm out}}
\left[\delta_{jk}-a_{jk}^{\rm in}u^\theta_{jk}(N_{jk})\right]+  \gamma\tilde{h}_{jk}  \dfrac{\bar{c}_{jk}}{\bar{d}_{jk}}\Big],
\end{aligned}
\qquad\
\end{equation}
where the geometrical 
parameter $\lambda_j$ is given by:
\begin{equation}
\lambda_j=\frac{1}{|D_j|}\sum_{k=1}^{m_j}\mu_k^{-1} \dfrac{ \ell_{jk} }{\bar{d}_{jk}} 
=\sum_{k=1}^{m_j}\frac{1}{d _{jk}\bar{d}_{jk}}.
\label{Eqlamda}
\end{equation}

 Due to $\bm{\pi}_1 $, we use the relationship $\bar{s}_j^n=\sum_{k=1}^{m_j}\mu_ks_j(N_{jk})$  to get the following inequality:
\begin{equation}
\begin{aligned}
\bar{s}^{n+1}_j \geq &\sum_{k=1}^{m_j}s_j(N_{jk}) \Big[\mu_k\left(1 - {\Delta t}\lambda_j\gamma \right) -\frac{ \Delta t\ell_{jk}}{|D_j| \left( a^{\rm in}_{jk}+a^{\rm out}_{jk}\right)}
\left[\delta_{jk}+a^{\rm out}_{jk}u^\theta_j(N_{jk})\right]\Big]\\
&+\frac{\Delta t}{|D_j|}\sum_{k=1}^{m_j}\ell_{jk}\Big[\frac{s_{jk}(N_{jk})}{a_{jk}^{\rm in}+a_{jk}^{\rm out}}
\left[\delta_{jk}-a_{jk}^{\rm in}u^\theta_{jk}(N_{jk})\right]+  \gamma\tilde{h}_{jk}  \dfrac{\bar{c}_{jk}}{\bar{d}_{jk}}\Big].
\end{aligned}
\qquad\
\label{Eq21}
\end{equation}
The last term of (\ref{Eq21}) is non-negative since $\delta_{jk}\geq a_{jk}^{\rm in} u^\theta_{jk}(N_{jk}) $ as shown in Eq.(\ref{delta}), $s_{jk}(N_{jk})\ge0$ and $\bar{c}_{jk}\ge0$. The first term on the right-hand side of (\ref{Eq21}) will be non-negative under a time step restriction to be determined using the condition
\begin{equation}
\mu_k-\Delta t \left( \mu_k\lambda_j\gamma +\frac{\ell_{jk}}{|D_j| \left( a^{\rm in}_{jk}+a^{\rm out}_{jk}\right)}
\left[\delta_{jk}+a_{jk}^{\rm out}u^\theta_{j}(N_{jk})\right] \right)\geq 0,
\label{E22}
\end{equation}
which is satisfied by requiring the following inequality:
\begin{equation}
\mu_k-\Delta t \left( \mu_k\lambda_j\gamma +\frac{\ell_{jk}}{|D_j|}
a^{\rm out}_{jk} \right)\geq 0.
\label{E23}
\end{equation}
The condition (\ref{E23}) is simply:
\begin{equation}
\Delta t\leq \frac{d_{jk}}{d_{jk}\lambda_{j} \gamma+2a^{\rm out}_{jk}}.
\label{E23b}
\end{equation}
It is straightforward to show that the positivity of the conservative variable $s$ is guaranteed by imposing the following time step restriction:
\begin{equation}
\begin{aligned}
\Delta t_c & \le \frac{d_{m}}{d_{m}\lambda_{M} \gamma+2a},\\
\end{aligned}
\label{Eq22a}
\end{equation}
where $\lambda_{M}:=\max_j \lbrace\lambda_{j}\rbrace$, and as already defined $d_{m}=\min_{j,k} \lbrace d_{jk}\rbrace$ and $a=\max_{j,k}\lbrace a^{out}_{jk},a^{in}_{jk} \rbrace.$

\subsection{ The time step restriction of the proposed  scheme}
The stability condition of the proposed scheme using the explicit Euler method in the temporal discretization is determined by the combination of advection and diffusion where the time step is limited by both the Courant-Friedrichs-Lewy number, $CFL$, and the Peclet number, $P_e$ \citep{li2012fully,murillo2005coupling}, as follows:
\begin{equation}
CFL+P_e\leq 1,
\end{equation}
The extreme local speeds at interfaces used in the proposed scheme are the same as those used for cell-vertex central upwind scheme for shallow water system \citep{beljadid2016well}. The time step restriction of this scheme can be applied for our case for the advection where the following CFL number is used
\citep{beljadid2016well}:
\begin{equation}
CFL=\frac{a}{\min_{j,k}\{d_{jk}\}}\Delta t_s.
\label{CFLhd}
\end{equation}
The von-Neumann stability analysis of the proposed discretization of the diffusion term leads to a time step restriction using the following Peclet number:
\begin{equation}
\begin{aligned}
P_e=&\gamma\sum_{k=1}^{m_j}\frac{2\mu_k}{d _{jk}\bar{d}_{jk}}\Delta t_s,\\
=&\gamma \tilde{\lambda}_j \Delta t_s,
\end{aligned}
\label{pe}
\end{equation}
where we used the following geometrical parameter of the same order as the parameter $\lambda_j$ defined in (\ref{Eqlamda}) in the condition developed for the positivity of the proposed scheme:

\begin{equation}
\tilde{\lambda}_j= \sum_{k=1}^{m_j}\frac{2\mu_k}{d _{jk}\bar{d}_{jk}}.
\label{Eqlamdp}
\end{equation}
Then, the stability requirement of the proposed numerical scheme is given by the following time step restriction:
\begin{equation}
\Delta t_{s}\leq \frac{d_m}{d_m\tilde{\lambda}_M\gamma+a},
\label{dts}
\end{equation}
where $\tilde{\lambda}_{M}:=\max_j \lbrace\tilde{\lambda}_{j}\rbrace$.
\begin{remark}
The terms on the right-hand sides in Eqs.(\ref{Eq22a}) and (\ref{dts}) for the time step restrictions respectively for the positivity and stability of the proposed scheme, have the same order thanks to the expressions of the geometrical parameters $\lambda_M$ and $\tilde{\lambda}_M$.
Under the conditions (\ref{Eq22a}) and (\ref{dts}), the positivity of the water depth and the concentration can be preserved if we use the three-stage third-order SSP Runge-Kutta method as temporal scheme since it is a convex combination of three forward Euler methods.
\end{remark}

\subsection{Maximum and minimum principles for the concentration}\label{S8}
In this section, we will prove the maximum and minimum principles for the scalar concentration. First, we will complement our reconstruction of the conservative variable $s=hc$. In the proposed procedure of the reconstruction, the parameter $\beta_{\max}$ used in Eq. (\ref{Eq16}) should also satisfy:
\begin{equation}
\min(\bar{c_j},\bar{c}_{jk})\leq c_j(N_{jk})\leq \max(\bar{c_j},\bar{c}_{jk}).
\label{Eq23}
\end{equation}
To avoid division by very small values of the water depth in the computation of the cell average of the concentration, we propose the following formula:
\begin{equation}
\bar{c}_{j} =\left\{ 
\begin{array}{ll}
\dfrac{\bar{s}_{j}}{\bar{h}_{j}}, & \text{if }\quad\bar{h}_{j}>\xi , \\ 
\min\left\lbrace c_{max},\max\left\lbrace c_{min},R_{j} \right\rbrace \right\rbrace,& \text{otherwise,} %
\end{array}%
\right.
\label{Eq23b}
\end{equation}%
where $c_{max}$ and $c_{min}$ are respectively the maximum and the minimum values of the concentration $\bar{c}_{j}$ and the concentrations $\bar{c}_{jk}$ at the neighboring cells,  in the previous step and $R_j$ are obtained from the desingularization formula:
$$
R_j=\frac{\sqrt{2}\,\bar{h}_{j}\,\bar{s}_{j}}{\sqrt{\bar{h}_{j}^4+max(\bar{h}_{j}^4,\varepsilon})},
$$
where we use the same $\varepsilon$ given in (\ref{Rg}) and  $\xi=10^{-6}$.

In the following, we will prove that the proposed scheme satisfies the maximum and minimum principles for the scalar concentration.
\begin{theorem}
 The semi-discrete form of the proposed scheme (\ref{Eq05}) for the scalar transport equation  without diffusion term, together with the proposed reconstruction (\ref{Eq15}) with the properties  (\ref{Eq23}-\ref{Eq23b}), ensure the maximum and the minimum principles for the 
scalar concentration:
\begin{equation}
 \min (\bar{c}_{j}^{n},\bar{c}_{j1}^{n},\bar{c}_{j2}^{n}\ldots,\bar{c}_{jm_j}^{n}) \leq \bar{c}_j^{n+1}\leq \max (\bar{c}_{j}^{n},\bar{c}_{j1}^{n},\bar{c}_{j2}^{n}\ldots,\bar{c}_{jm_j}^{n}).
\end{equation}
\end{theorem}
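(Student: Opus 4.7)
The plan is to recast the forward Euler update of $\bar{s}_j^{n+1}$ (equation (\ref{Eq20}) with $\bar{{T}}^{(4)}_j\equiv 0$) as a convex combination of the reconstructed interface concentrations $c_j(N_{jk})$ and $c_{jk}(N_{jk})$ at time $t^n$, and then to enclose the combination by means of (\ref{Eq23}). First, I would eliminate $\bar{s}_j^n$ using property $\bm{\pi}_1$, namely $\bar{s}_j^n=\sum_{k=1}^{m_j}\mu_k s_j(N_{jk})$, write the normal flux as $\cos\theta_{jk}(us)+\sin\theta_{jk}(vs)=u^\theta s$, and substitute $s_j(N_{jk})=h_j(N_{jk})c_j(N_{jk})$ and $s_{jk}(N_{jk})=h_{jk}(N_{jk})c_{jk}(N_{jk})$. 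After regrouping, the update reads
\begin{equation*}
\bar{s}_j^{n+1}=\sum_{k=1}^{m_j}A_k\, h_j(N_{jk})\, c_j(N_{jk})+\sum_{k=1}^{m_j}B_k\, h_{jk}(N_{jk})\, c_{jk}(N_{jk}),
\end{equation*}
with coefficients
\begin{equation*}
A_k=\mu_k-\frac{\Delta t\,\ell_{jk}}{|D_j|(a^{\rm in}_{jk}+a^{\rm out}_{jk})}\bigl[\delta_{jk}+a^{\rm out}_{jk}u^\theta_j(N_{jk})\bigr],\qquad B_k=\frac{\Delta t\,\ell_{jk}}{|D_j|(a^{\rm in}_{jk}+a^{\rm out}_{jk})}\bigl[\delta_{jk}-a^{\rm in}_{jk}u^\theta_{jk}(N_{jk})\bigr].
\end{equation*}
Crucially, these are exactly the same $A_k,B_k$ that appear in the identity (\ref{Eq021}) for the water depth, so $\bar{h}_j^{n+1}=\sum_k\bigl[A_k h_j(N_{jk})+B_k h_{jk}(N_{jk})\bigr]$. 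This matching between the $\bar{s}$-update and the $\bar{h}$-update is what later turns a non-negative linear combination into a genuine convex one, and it relies on the continuity of the piecewise linear bottom reconstruction already invoked in the derivation of (\ref{Eq021}).

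Next, I would verify that every weight $A_k h_j(N_{jk})$ and $B_k h_{jk}(N_{jk})$ is non-negative: positivity of $B_k$ follows from (\ref{delta}) (which gives $\delta_{jk}\geq a^{\rm in}_{jk}u^\theta_{jk}(N_{jk})$), and positivity of $A_k$ is precisely the CFL restriction (\ref{Eq22a}) already established in Section \ref{S5}, while $h_j(N_{jk})$ and $h_{jk}(N_{jk})$ are non-negative by the positivity-preserving reconstruction. In the wet regime $\bar{h}_j^{n+1}>\xi$ the first branch of (\ref{Eq23b}) applies, so dividing by $\bar{h}_j^{n+1}$ produces
\begin{equation*}
\bar{c}_j^{n+1}=\sum_{k=1}^{m_j}\frac{A_k h_j(N_{jk})}{\bar{h}_j^{n+1}}\, c_j(N_{jk})+\sum_{k=1}^{m_j}\frac{B_k h_{jk}(N_{jk})}{\bar{h}_j^{n+1}}\, c_{jk}(N_{jk}),
\end{equation*}
which is a genuine convex combination because the normalized weights are non-negative and sum to $1$.

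To close, I would invoke (\ref{Eq23}): each reconstructed value $c_j(N_{jk})$ lies between $\min\{\bar{c}_j^n,\bar{c}_{jk}^n\}$ and $\max\{\bar{c}_j^n,\bar{c}_{jk}^n\}$, and the symmetric bound applied in the neighboring cell $D_{jk}$ (whose neighbor across $e_{jk}$ is $D_j$) encloses $c_{jk}(N_{jk})$ between the same two cell averages. Hence every pointwise value appearing in the convex combination lies in $[\min_\ell\bar{c}^n,\max_\ell\bar{c}^n]$, where the min/max runs over $\bar{c}_j^n,\bar{c}_{j1}^n,\ldots,\bar{c}_{jm_j}^n$, and convexity preserves this enclosure, giving the claim.

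The main obstacle is the dry or near-dry regime $\bar{h}_j^{n+1}\leq\xi$, where $\bar{c}_j^{n+1}$ is not defined by the quotient $\bar{s}_j^{n+1}/\bar{h}_j^{n+1}$ but by the clipped second branch of (\ref{Eq23b}). There the convex-combination derivation does not apply, but I would observe that the clipping thresholds $c_{\min}$ and $c_{\max}$ are by construction the extrema of $\bar{c}_j^n$ together with its neighbors at the previous step, so the envelope inequality holds trivially irrespective of the desingularized value $R_j$. A more routine but essential point is the careful algebra showing that the coefficients in the $\bar{s}$ update match those in the $\bar{h}$ update termwise; this is the linchpin that promotes a non-negative linear combination into a normalized convex one and has to be executed cleanly before the division by $\bar{h}_j^{n+1}$ is taken.
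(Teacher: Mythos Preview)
Your proposal is correct and follows essentially the same route as the paper: both rewrite the forward Euler update for $\bar{s}_j^{n+1}$ in terms of the coefficients $A_k,B_k$ that also govern the $\bar{h}_j^{n+1}$ update (\ref{Eq021}), use the sign conditions from (\ref{delta}) together with the CFL restriction to make all weights non-negative, and then sandwich $\bar{s}_j^{n+1}$ between $c_{\min}\bar{h}_j^{n+1}$ and $c_{\max}\bar{h}_j^{n+1}$ before dividing. Your explicit convex-combination framing and your separate handling of the near-dry branch of (\ref{Eq23b}) are in fact slightly more careful than the paper's own argument, which simply divides by $\bar{h}_j^{n+1}$ without comment.
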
 
 \begin{proof}
 Let $c_{\min}=\min(\bar{c}_{j}^{n},\bar{c}_{j1}^{n},\bar{c}_{j2}^{n}\ldots,\bar{c}_{jm_j}^{n})$ and $c_{\max}=\max (\bar{c}_{j}^{n},\bar{c}_{j1}^{n},\bar{c}_{j2}^{n}\ldots,\bar{c}_{jm_j}^{n})$. Following Eq. (\ref{Eq23}), we have $c_{\min}\leq c_j(N_{jk})\leq c_{\max}$ and by combining these inequalities with Eq. (\ref{Eq20}) with $\bar{T}_j^{(4)}=0$ and using $s_j(N_{jk})=c_j(N_{jk})h_j(N_{jk})$ and $s_{jk}(N_{jk})=c_{jk}(N_{jk})h_{jk}(N_{jk})$, we obtain:
\begin{equation}
\begin{aligned}
\bar{s}_j^{n+1}\leq c_{\max} &\left\lbrace \sum_{k=1}^{m_j}h_j(N_{jk})\Big[\mu_k-\frac{\Delta t \ell_{jk}}{|D_j| \left( a^{\rm in}_{jk}+a^{\rm out}_{jk}\right)} \left[\delta_{jk}+a^{\rm out}_{jk}u^\theta_j(N_{jk})\right]\Big]\right.\\
&\left.+\frac{\Delta t}{|D_j|}\sum_{k=1}^{m_j}h_{jk}(N_{jk})\,\frac{\ell_{jk}}{a_{jk}^{\rm in}+a_{jk}^{\rm out}}
\left[\delta_{jk}-a_{jk}^{\rm in}u^\theta_{jk}(N_{jk})\right]\right\rbrace,\\
\leq c_{\max}& \left\lbrace \sum_{k=1}^{m_j}h_j(N_{jk})\mu_k-\frac{\Delta t}{|D_j|}\sum_{k=1}^{m_j}\frac{\ell_{jk}}{a^{\rm in}_{jk}+a^{\rm out}_{jk}} \left[\delta_{jk}+a^{\rm out}_{jk}u^\theta_j(N_{jk})\right]\right.\\
&\left.+\frac{\Delta t}{|D_j|}\sum_{k=1}^{m_j}h_{jk}(N_{jk})\,\frac{\ell_{jk}}{a_{jk}^{\rm in}+a_{jk}^{\rm out}}
\left[\delta_{jk}-a_{jk}^{\rm in}u^\theta_{jk}(N_{jk})\right]\right\rbrace,
\end{aligned}
\qquad\label{Eq24}
\end{equation} 
and
\begin{equation}
\begin{aligned}
\bar{s}_j^{n+1}\geq c_{\min} &\left\lbrace \sum_{k=1}^{m_j}h_j(N_{jk})\Big[\mu_k-\frac{\Delta t \ell_{jk}}{|D_j| \left( a^{\rm in}_{jk}+a^{\rm out}_{jk}\right)} \left[\delta_{jk}+a^{\rm out}_{jk}u^\theta_j(N_{jk})\right]\Big]\right.\\
&\left.+\frac{\Delta t}{|D_j|}\sum_{k=1}^{m_j}h_{jk}(N_{jk})\,\frac{\ell_{jk}}{a_{jk}^{\rm in}+a_{jk}^{\rm out}}
\left[\delta_{jk}-a_{jk}^{\rm in}u^\theta_{jk}(N_{jk})\right]\right\rbrace,\\
\geq c_{\min}& \left\lbrace \sum_{k=1}^{m_j}h_j(N_{jk})\mu_k-\frac{\Delta t}{|D_j|}\sum_{k=1}^{m_j}\frac{\ell_{jk}}{a^{\rm in}_{jk}+a^{\rm out}_{jk}} \left[\delta_{jk}+a^{\rm out}_{jk}u^\theta_j(N_{jk})\right]\right.\\
&\left.+\frac{\Delta t}{|D_j|}\sum_{k=1}^{m_j}h_{jk}(N_{jk})\,\frac{\ell_{jk}}{a_{jk}^{\rm in}+a_{jk}^{\rm out}}
\left[\delta_{jk}-a_{jk}^{\rm in}u^\theta_{jk}(N_{jk})\right]\right\rbrace.
\end{aligned}
\qquad\label{Eq224}
\end{equation} 
 Using  $\sum_{k=1}^{m_j} \mu_k h_j(N_{jk})=\bar{h}_j^{n}$, we obtain:
\begin{equation}
\begin{aligned}
 c_{\min} \bar{h}_j^{n+1}\leq\bar{s}_j^{n+1}&\leq c_{\max} \bar{h}_j^{n+1}.
\end{aligned}
\qquad\label{Eq25}
\end{equation} 
Since, $\bar{s}_j^{n+1}=  \bar{c}_j^{n+1}\bar{h}_j^{n+1}$ we conclude that $c_{\min}\leq\bar{c}_j^{n+1}\leq c_{\max}$ and this completes the proof.
 \end{proof}

\section{Numerical experiments}\label{S9}
In this section, the proposed scheme for the coupled model for water flow and solute transport system (\ref{Eq1}), is validated against several benchmark tests. In all of the numerical experiments, we have set $g=9.81$ and we used different values of the parameter $\nu$. In the numerical example \ref{T1}, a dam-break problem with constant concentration is considered to test the ability of the proposed scheme to preserve constant state of concentration for any hydrodynamic where we consider rapidly varying flow. The numerical tests \ref{T2} and \ref{T4} are performed for convection-diffusion problems. In the numerical example \ref{T5}, a dam-break problem over three humps is used to assess the model's ability for the prediction of the flow over complex bottom topography with scalar concentration, involving wet/dry areas.
\subsection{Example 1: Dam break with scalar transport}\label{T1}
We start our numerical experiments with the following example, where wet and dry bed dam-break problems are considered. A computational domain $[0,10]\times[0,5]$ with frictionless bed is discretized  using an average cell area $\left |\bar{D_{j}} \right |=4.962 \times 10^{-3}$. The dam is located at $x_0=10$, and the initial water depth is set to $h_l=1$ upstream of the dam, whereas downstream the depth is $h_r=0.1$ for wet bed dam and $h_r=0$ for dry bed dam, with zero velocity field. We perform the numerical simulations using outflow boundary conditions and we compare the results of the proposed numerical method with the analytical solutions for our problem which are described in \cite{delestre2013swashes}. 

Figure \ref{Fig2} shows the profile of the computed water depth, compared with the analytical solution in dry dam-break and in wet dam-break at time $t=1$. To show the well-balanced property of the proposed scheme for the concentration, we consider an initial condition with a constant concentration over the domain. The results of our numerical simulations demonstrate that the well-balanced property is satisfied where the constant state of the concentration is preserved in the whole domain for any hydrodynamic field of the flow.
\begin{figure}[h!]
\begin{center}
(a)\\ 
\includegraphics[scale=0.32]{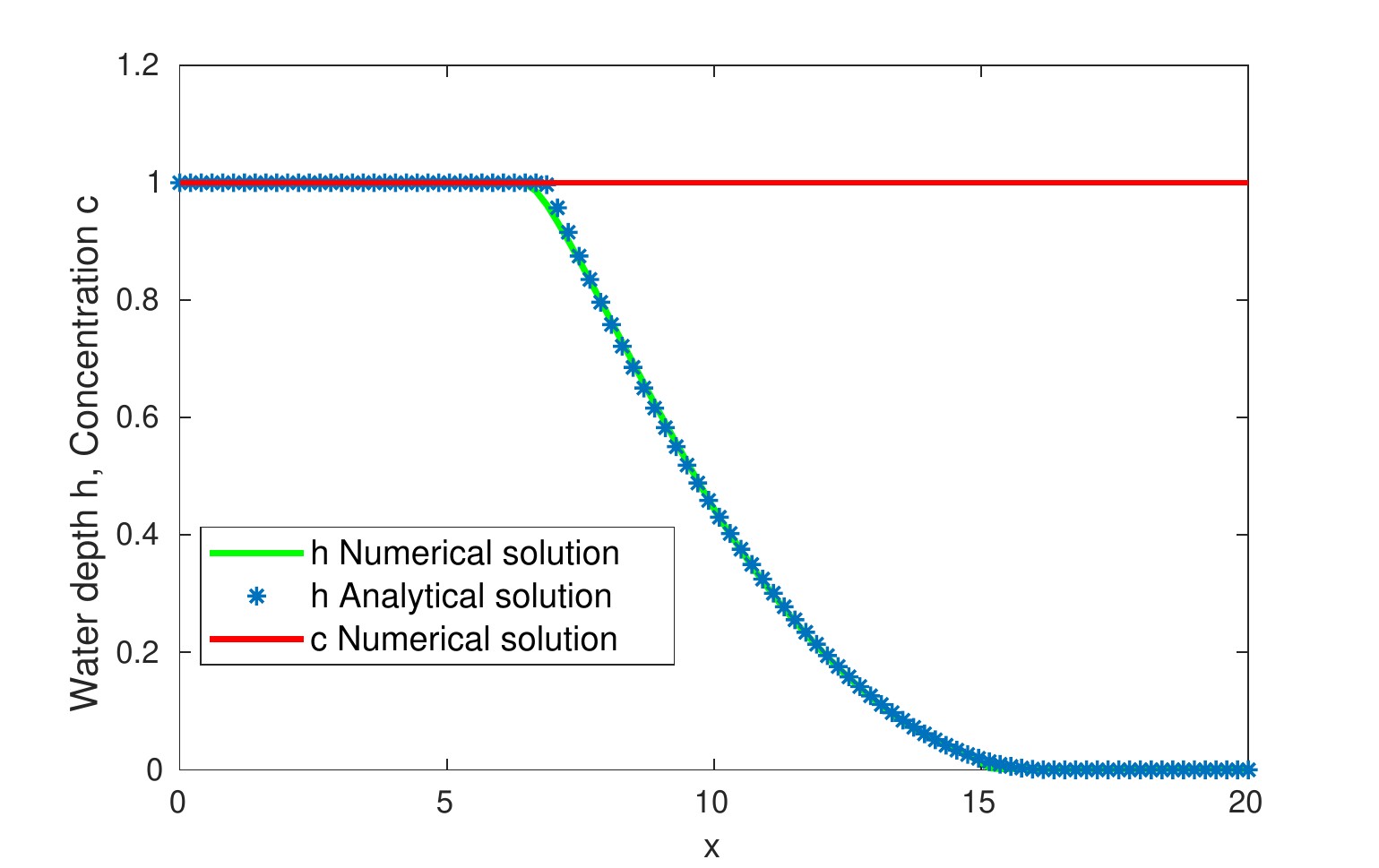}\quad \includegraphics[scale=0.25]{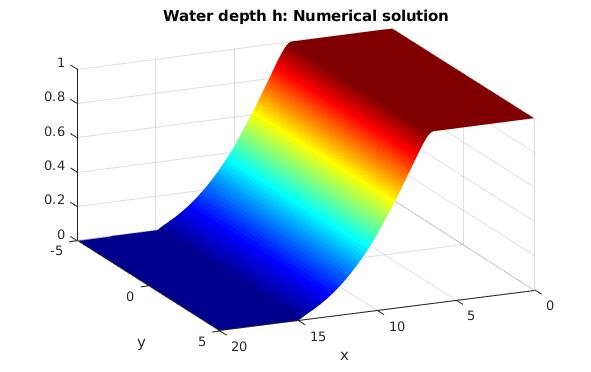}\quad \includegraphics[scale=0.42]{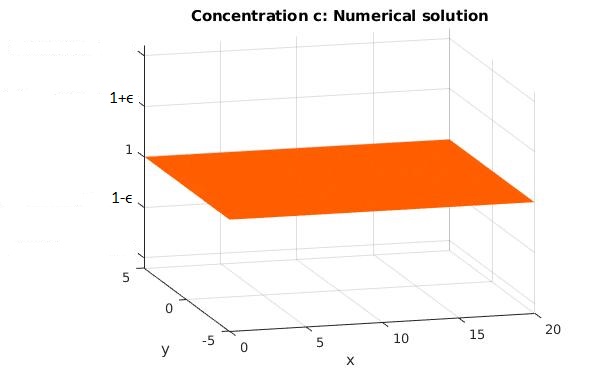} 
(b)\\ 
\includegraphics[scale=0.32]{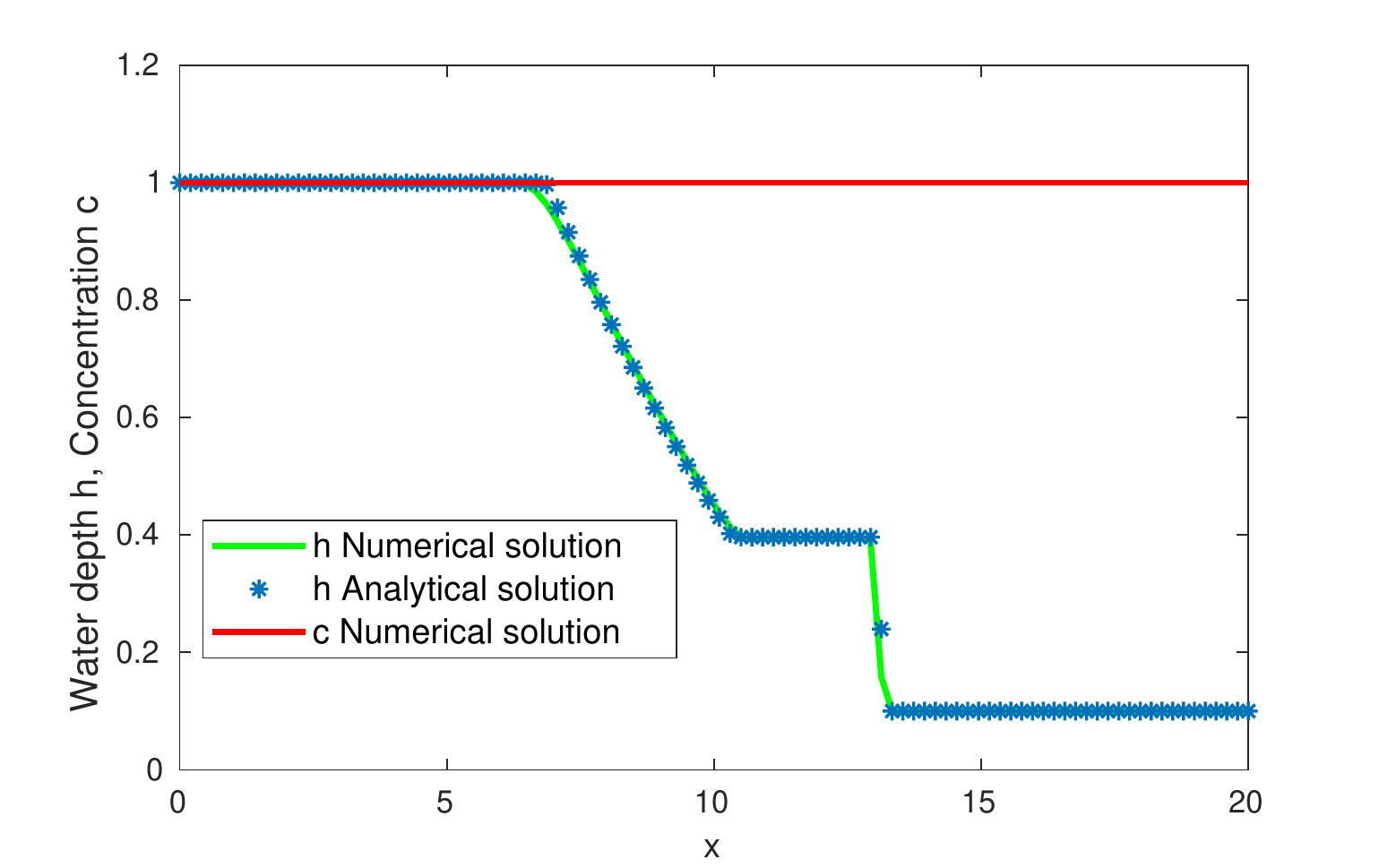}\quad \includegraphics[scale=0.25]{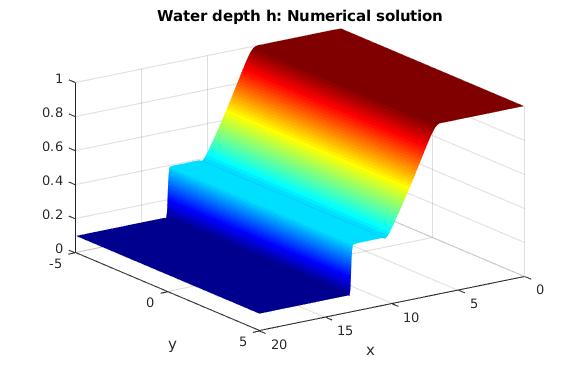}\quad \includegraphics[scale=0.42]{2Ddamcc1.jpg} 
\caption{Numerical and analytical solutions for the water depth and concentration at time $t=1$ for dry dam-break (a) and wet dam-break (b) problems. The computed solutions of the concentration are shown with $\epsilon=10^{-15}$.  }
\label{Fig2}
\end{center}
\end{figure}
\subsection{Example 2: Passive scalar advection }\label{T2}
In this numerical example, we consider the advection of an amount of pollutant by shallow water flows over flat and smooth bed. The  computational domain is $[0,10]\times[0,10]$ which is discretized  using an average cell area $\left |\bar{D_{j}} \right |=1.243\times 10^{-3}$. The water depth is initially constant 
everywhere with $h=1$, the velocity field of the flow is uniform $u=v=0.5$, and the concentration is given by:
\begin{equation}
c(x,y,0)=\left\{ 
\begin{array}{ll}
1, & \text{if }\quad  \left \|  \bm x-\bm x_0\right \|  \leq 1 , \\ 
0,& \quad\quad\text{otherwise} %
\end{array}%
\right.
\end{equation}%
where, $\bm x=(x,y)$ and $\bm x_0=(1.5,1.5)$.

In our numerical simulations, inflow and outflow conditions are applied at the boundaries. Figure \ref{Fig22} (right) shows the two-dimensional evolution of the computed scalar concentration at different times where the advection of the passive concentration follows 
the motion of the flow and moves diagonally across the domain with the constant speed of the flow. In Figure \ref{Fig22} (left), we present the cross section along the $x$-axis of the numerical solutions for both the proposed scheme and the original central-upwind scheme compared with the analytical solution. Our 
results show that the proposed method performs well in terms of numerical diffusion compared to the original central-upwind scheme \cite{beljadid2016well} and the numerical solution can be further improved by using a refined mesh.
\begin{figure}
\begin{center}
\includegraphics[scale=0.35]{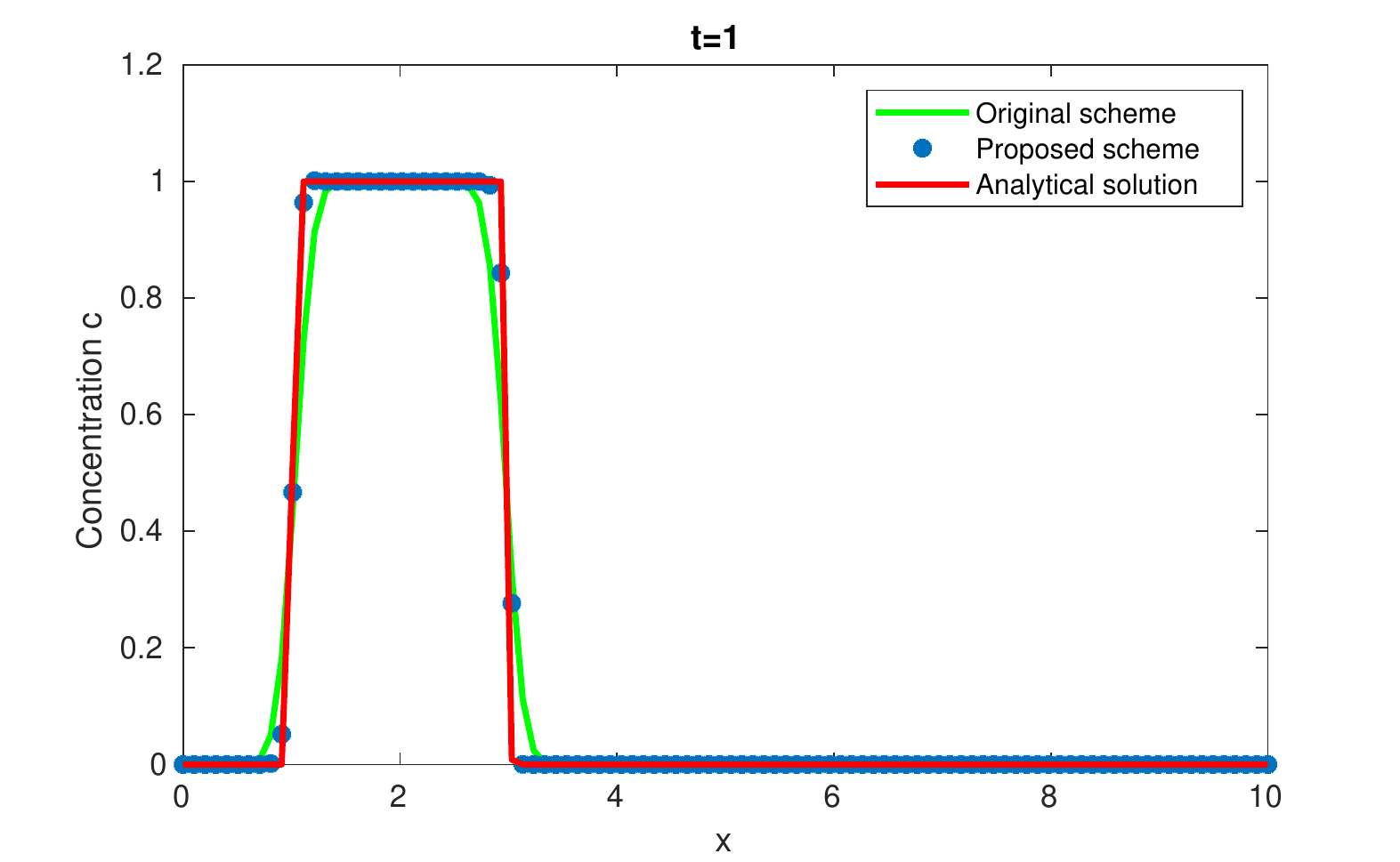} \quad \includegraphics[scale=0.25] {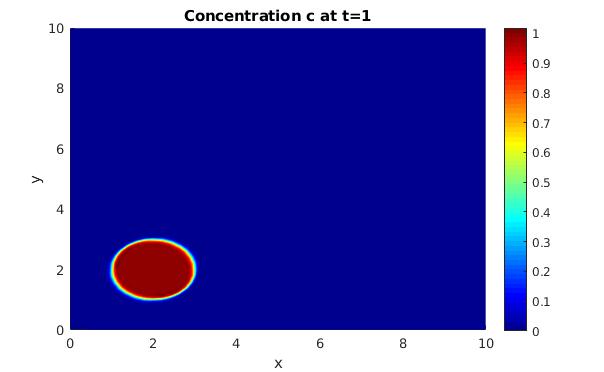}
\includegraphics[scale=0.35]{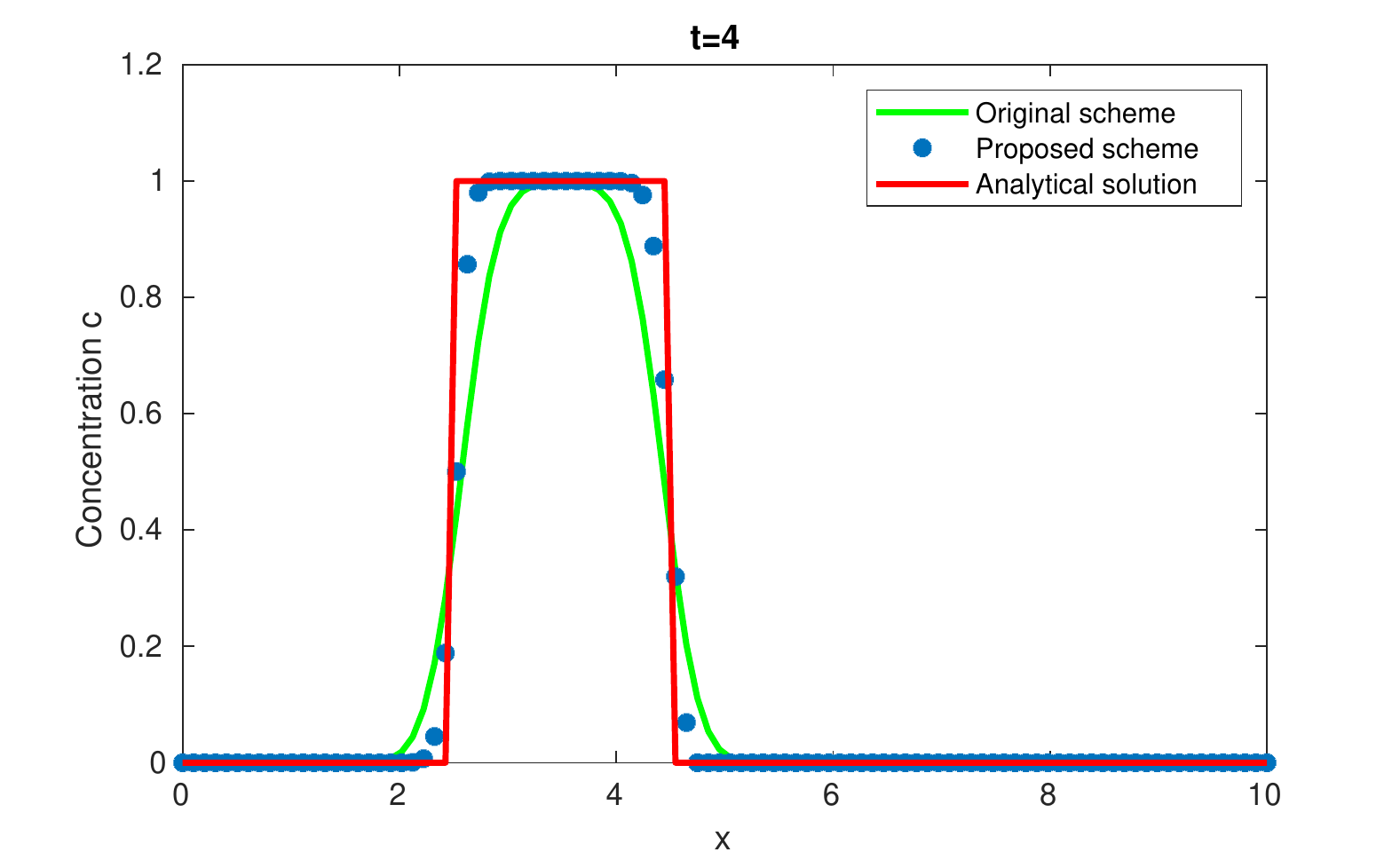} \quad \includegraphics[scale=0.25]{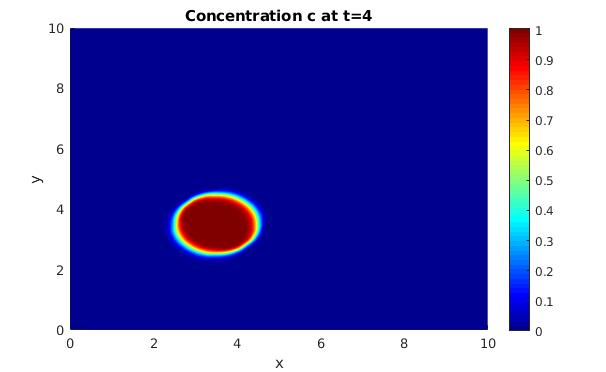}  
\includegraphics[scale=0.35]{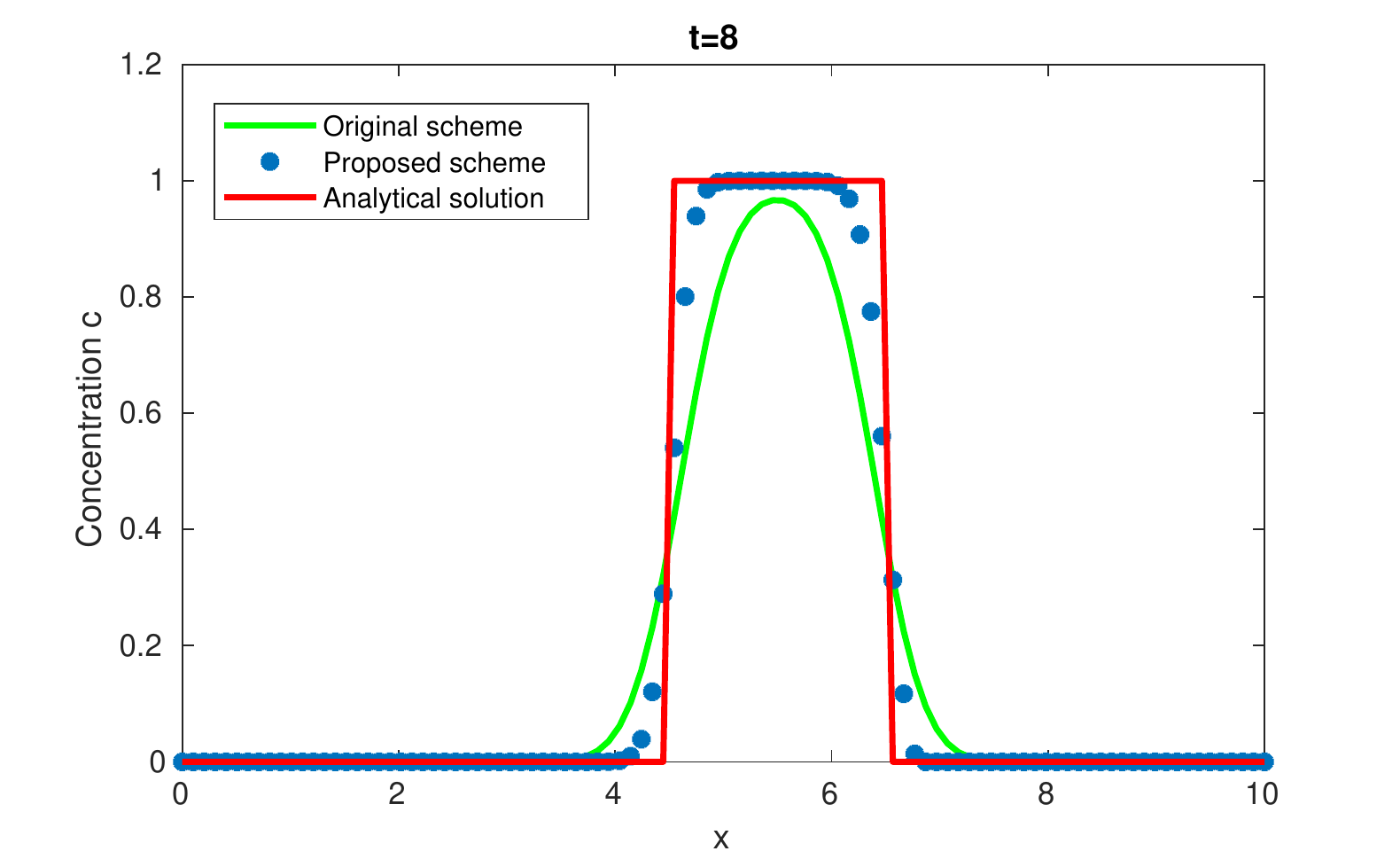} \quad \includegraphics[scale=0.25]{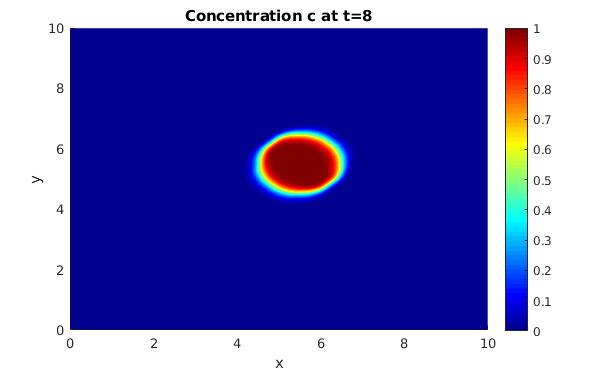} 
\includegraphics[scale=0.35]{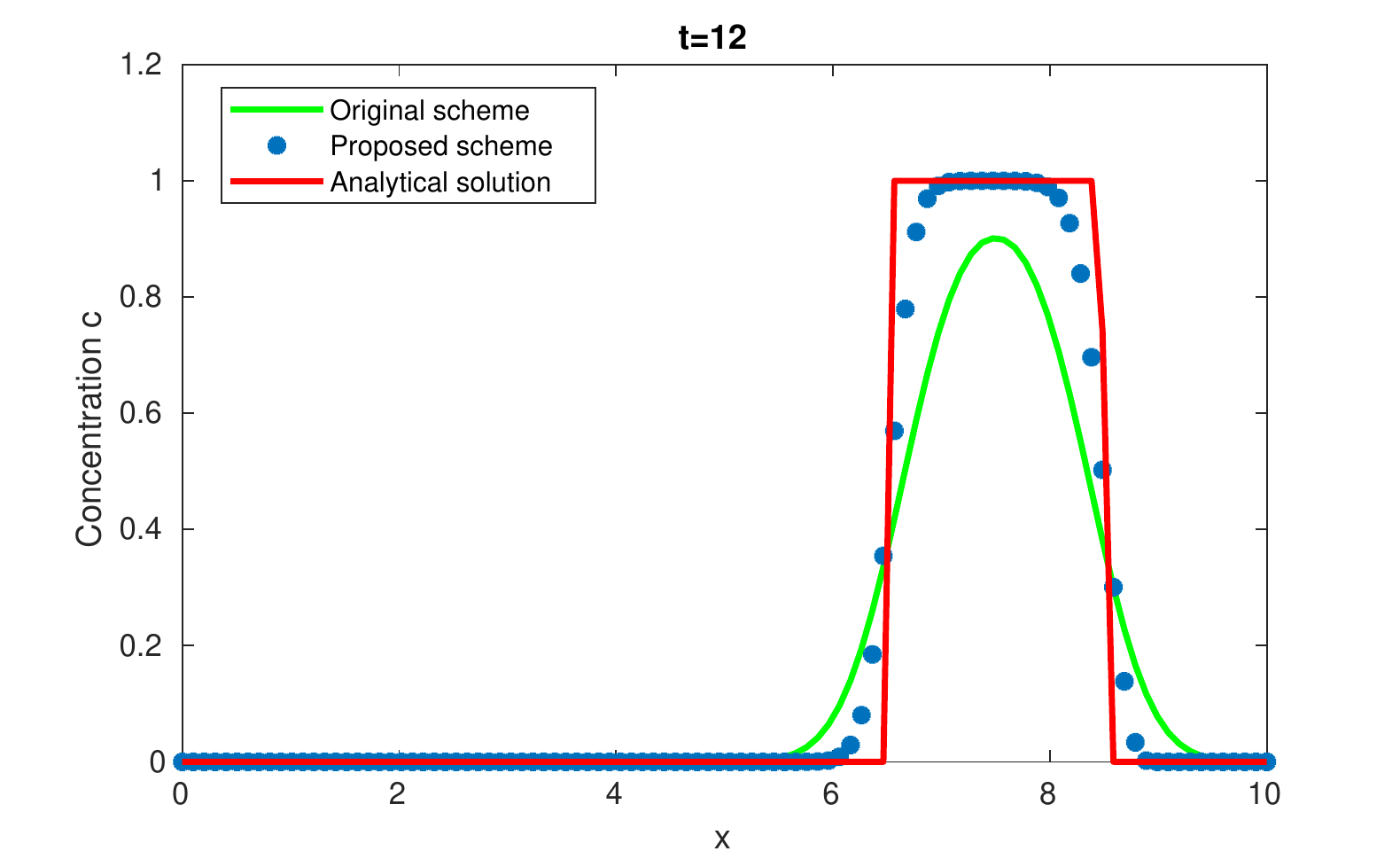} \quad \includegraphics[scale=0.25]{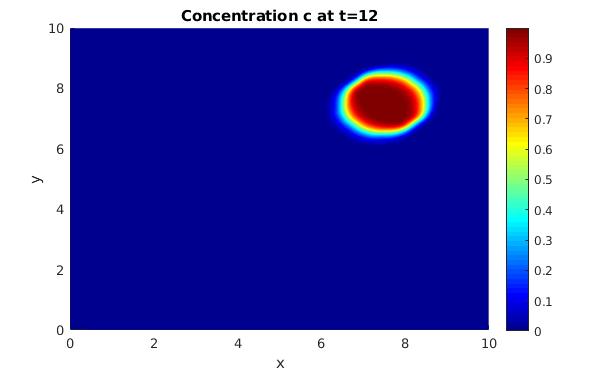} 
\caption{Modeling of a passive scalar concentration using the proposed method and the original scheme. Left : profiles of the concentration at time $t=1$, $t=4$, $t=8$ and $t=12$. Right: two-dimensional evolution of the computed scalar concentration using the proposed method.}
\label{Fig22}
\end{center}
\end{figure}
\newpage
\subsection{Example 3: Scalar diffusion process}\label{T3}
In this example we consider the diffusion phenomenon of a Gaussian distribution of the concentration of a pollutant in still water \citep{behzadi2018solution,vanzo2016pollutant}. We used the proposed numerical model to perform numerical simulations of the diffusion process of a pollutant 
and compare our results to available analytical solutions. The computational domain is $[-2,2]\times[-2,2]$, and the water surface is initially at rest over a flat bottom topography with water depth $h=0.01$. Our system is  discetized  using an average cell area $\left |\bar{D_{j}} \right |=7.292 \times 10^{-4}$. In the numerical 
test, the following analytical solution of the concentration \citep{behzadi2018solution,vanzo2016pollutant} is employed with outflow boundary conditions on all sides of the domain.
\begin{equation}
c(x,y,t)=\dfrac{\sigma^2}{4\gamma t+\sigma^2}\exp\left(\dfrac{-(x-x_0)^2}{4\gamma t+\sigma^2}+\dfrac{-(y-y_0)^2}{4\gamma t+\sigma^2}\right).
\end{equation}
The initial distribution of the concentration is a Gaussian centered at $ x_0=y_0=0$ with a standard deviation  $\sigma=0.1$ and diffusion coefficient $\gamma=0.01$. The numerical results of the proposed scheme are displayed in Figure \ref{Fig3}, where the computed  concentration is compared with the analytical solution at different times. Our results confirm that the predicted concentration profiles, which are obtained using the proposed numerical model, agree quite well with the analytical solution. \\To assess the accuracy of our results, we compute the following $L_2$-error for the concentration of the pollutant:
\begin{equation}
E=\sqrt{\dfrac{\sum_j \mid D_j \mid \mid {c}_{j}^{(n)}-{c}_{j}^{(a)} \mid^2 }{\sum_j \mid D_j \mid}},
\end{equation}
where ${c}_{j}^{(n)}$ and ${c}_{j}^{(a)}$ are the numerical and analytical solutions at the center of mass of the cell $D_j$, respectively. The computed solution by the proposed scheme at time $t = 15$ using $20201$ computational cells has an $L_2$-error $E=4.02\times  10^{-6}$ which confirms the accuracy of our results. 
\begin{figure}[h!]
\begin{center}
\includegraphics[scale=0.33]{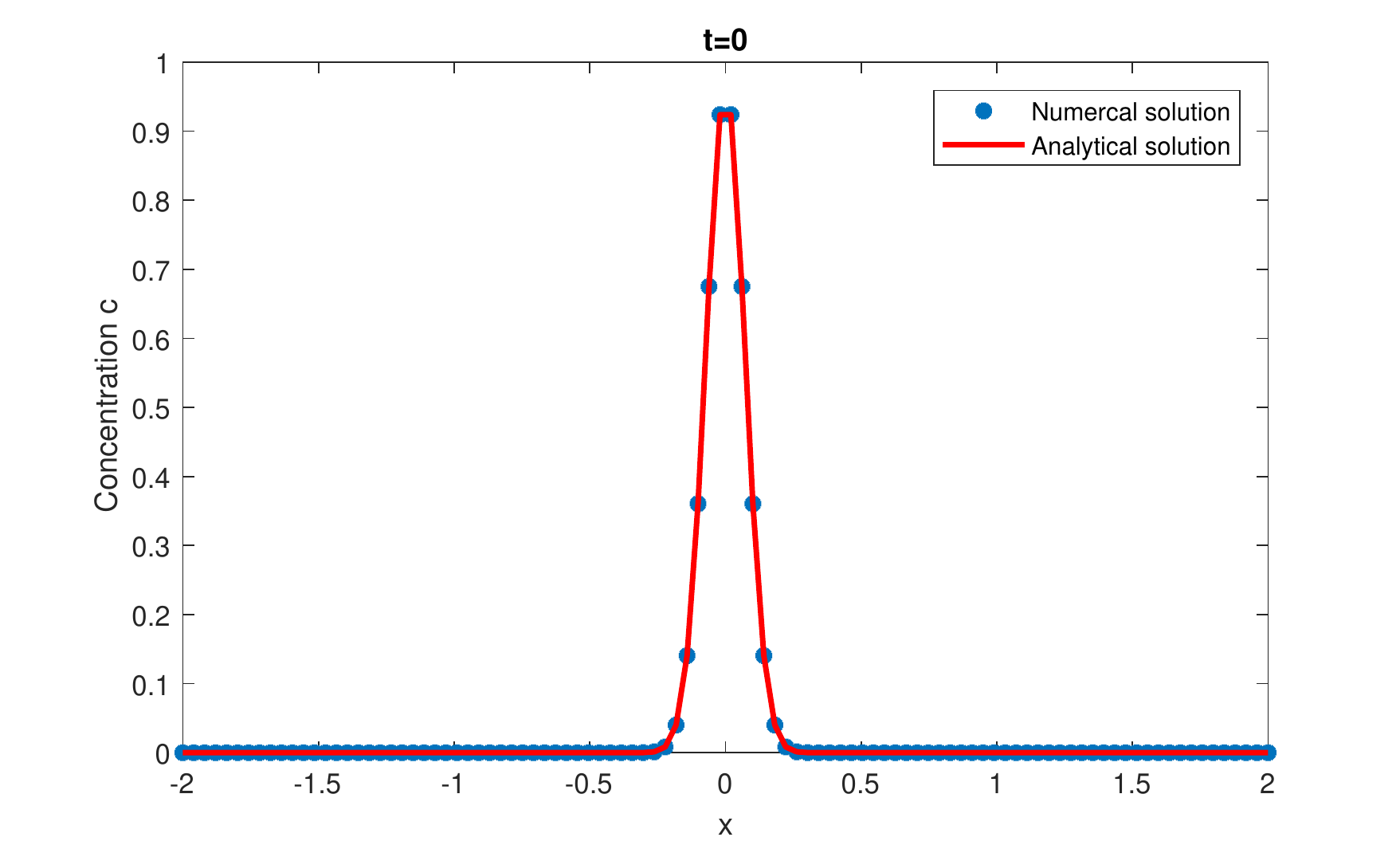} \quad \includegraphics[scale=0.22]{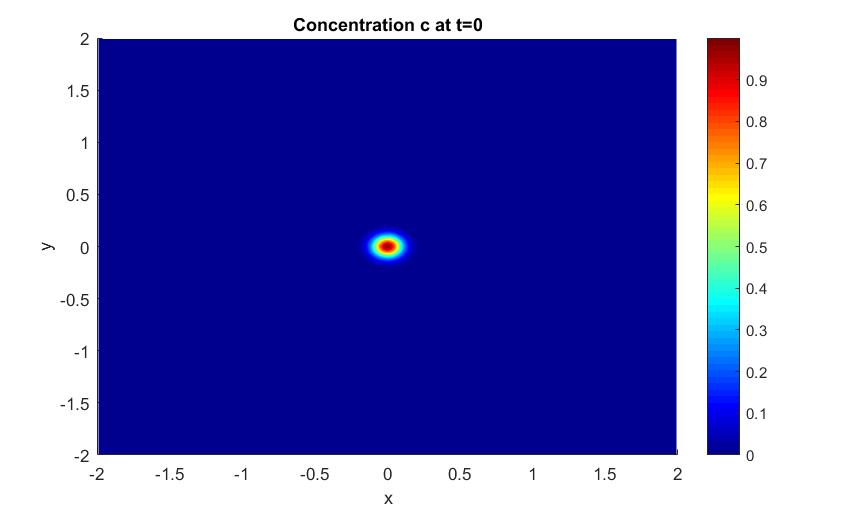} 
\includegraphics[scale=0.33]{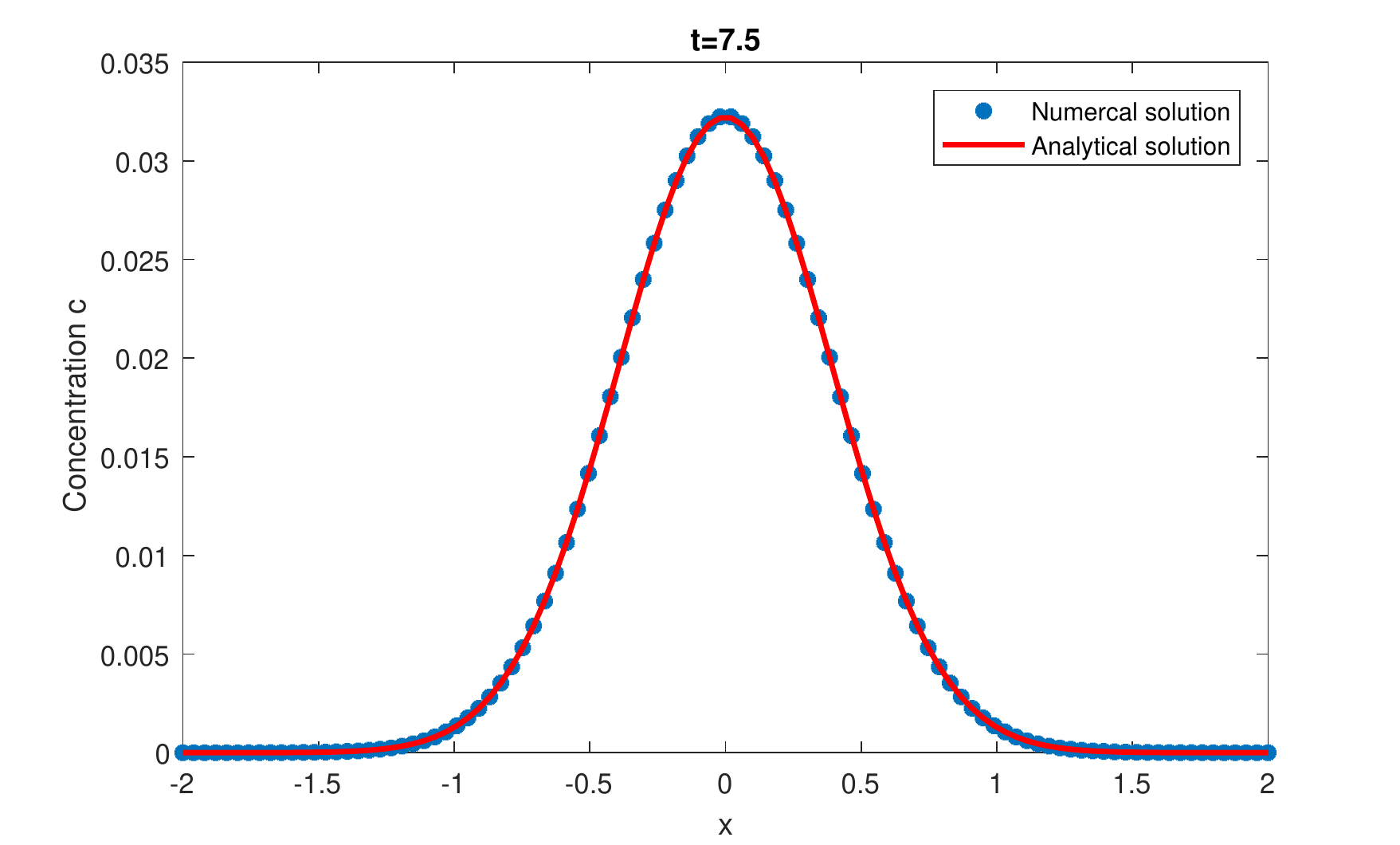} \quad \includegraphics[scale=0.22]{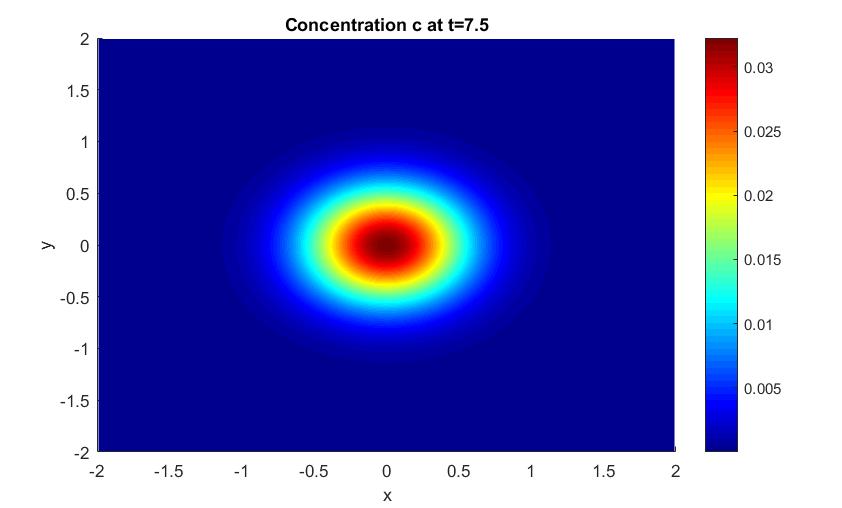} 
\includegraphics[scale=0.33]{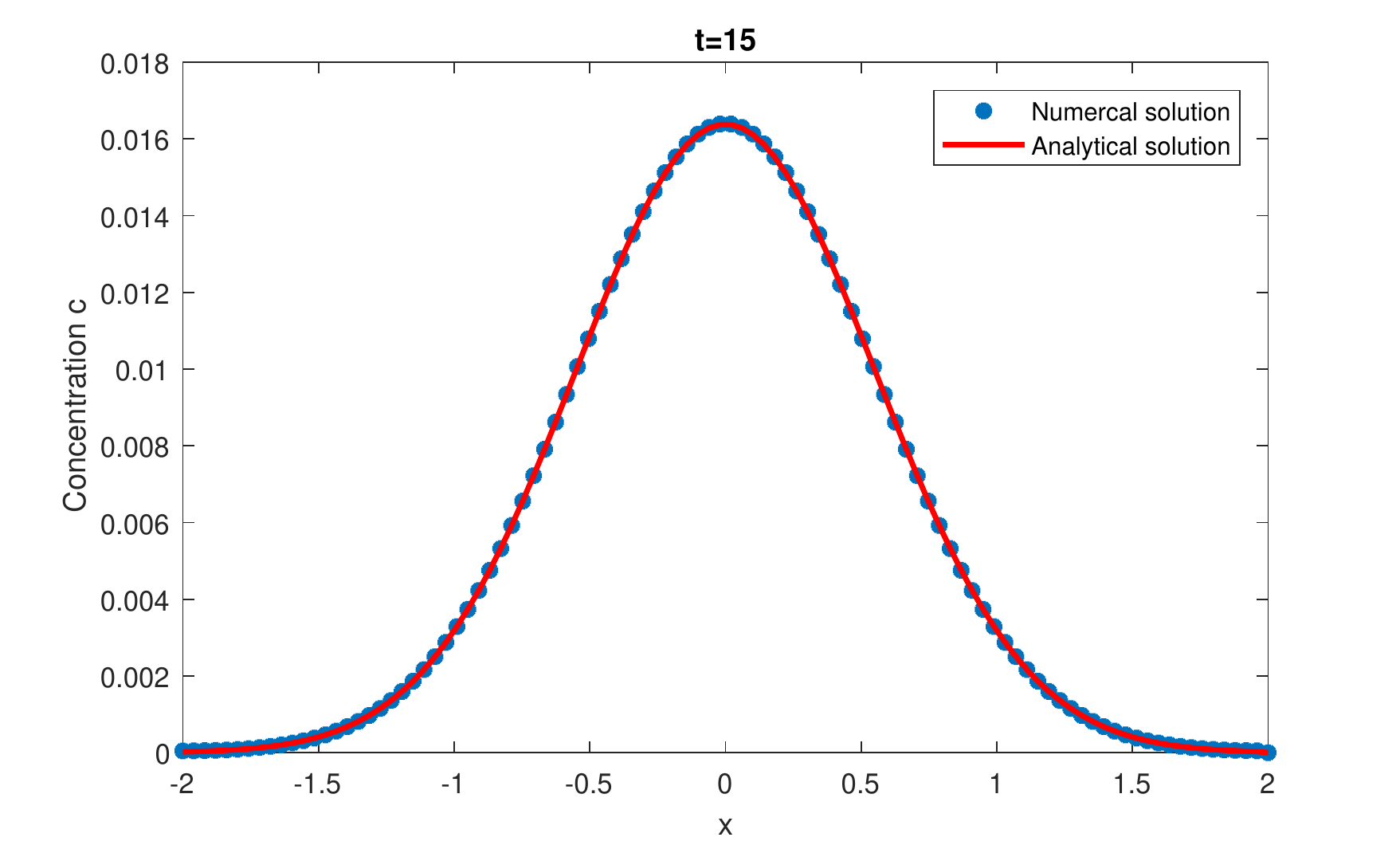} \quad \includegraphics[scale=0.22]{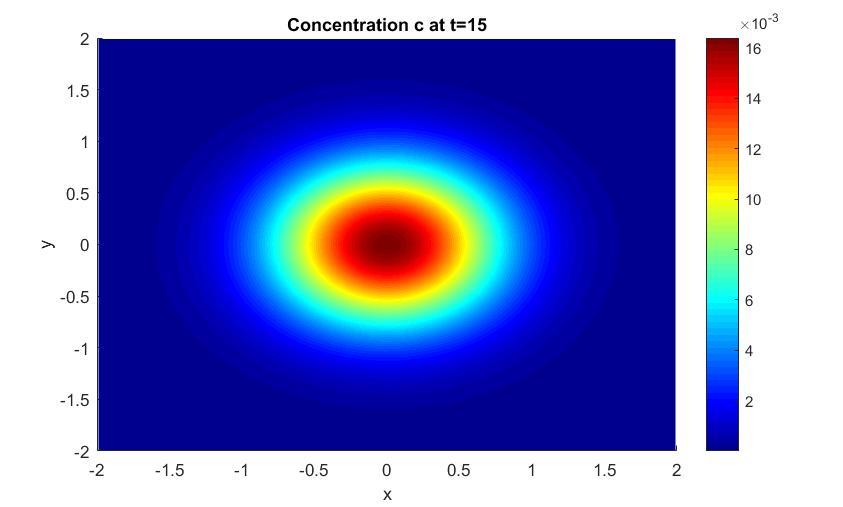} 
\caption{Modeling of the scalar diffusion process using the proposed method. The computed and analytical solutions of the scalar concentration at different times $t=0$, $t=7.5$ and $t=15$. }
\label{Fig3}
\end{center}
\end{figure}

\subsection{Example 4: Convection-diffusion process}\label{T4}
To further validate the capability of the proposed numerical scheme to predict the solute distribution in the presence of the diffusion term,  we perform numerical simulations for an advection-diffusion problem \citep{vanzo2016pollutant}. The numerical test is performed using the following analytical solution of the scalar concentration, given by \citep{vanzo2016pollutant,behzadi2018solution}:
\begin{equation}
c(x,y,t)=\dfrac{M_s}{4 \pi h \gamma t}\exp\left(\dfrac{-(x-ut-x_0)^2}{4\gamma t}+\dfrac{-(y-vt-y_0)^2}{4 \gamma t}\right),
\label{pd}
\end{equation}
where in our numerical test we used the diffusive coefficient $\gamma=0.01$, $M_s=0.1$ and $x_0=y_0=-0.45$. 

We start our numerical simulations using an initial condition for the concentration obtained by setting $t = 0.1$ in (\ref{pd}). We consider an uniform velocity field of the flow  $u=v=0.5$ and a constant water depth 
$h=1$ as initial condition. The computational domain is $[-1,1]\times[-1,1]$ with a flat and frictionless bed. The system is discretized  using an average cell area $\left |\bar{D_{j}} \right |=4.97 \times 10^{-5}$  and we used inflow and outflow conditions at the boundaries of the domain. The predicted concentration by using the proposed scheme is compared with the analytical solution in Figure \ref{FIG4} (left) at different times, while Figure \ref{FIG4} (right) presents the two-dimensional view of the computed solution.  The results of our numerical simulations show that the proposed scheme performs well in the prediction of the solution of the convection-diffusion process. The scalar concentration moves diagonally across the domain with the constant speed of the hydrodynamic field $u=v= 0.5$, and the peak level of the concentration decreases from 8.9 to 0.51, due to the presence of the physical diffusion term with accurate results compared to the analytical solution. The numerical solution at time $t = 1.5$ for the concentration of the pollutant, computed by the proposed method using $80401$ cells, has an $L_2$-error of $E=2 \times 10^{-3}$.
\begin{figure}[h!]
\begin{center}
\includegraphics[scale=0.33]{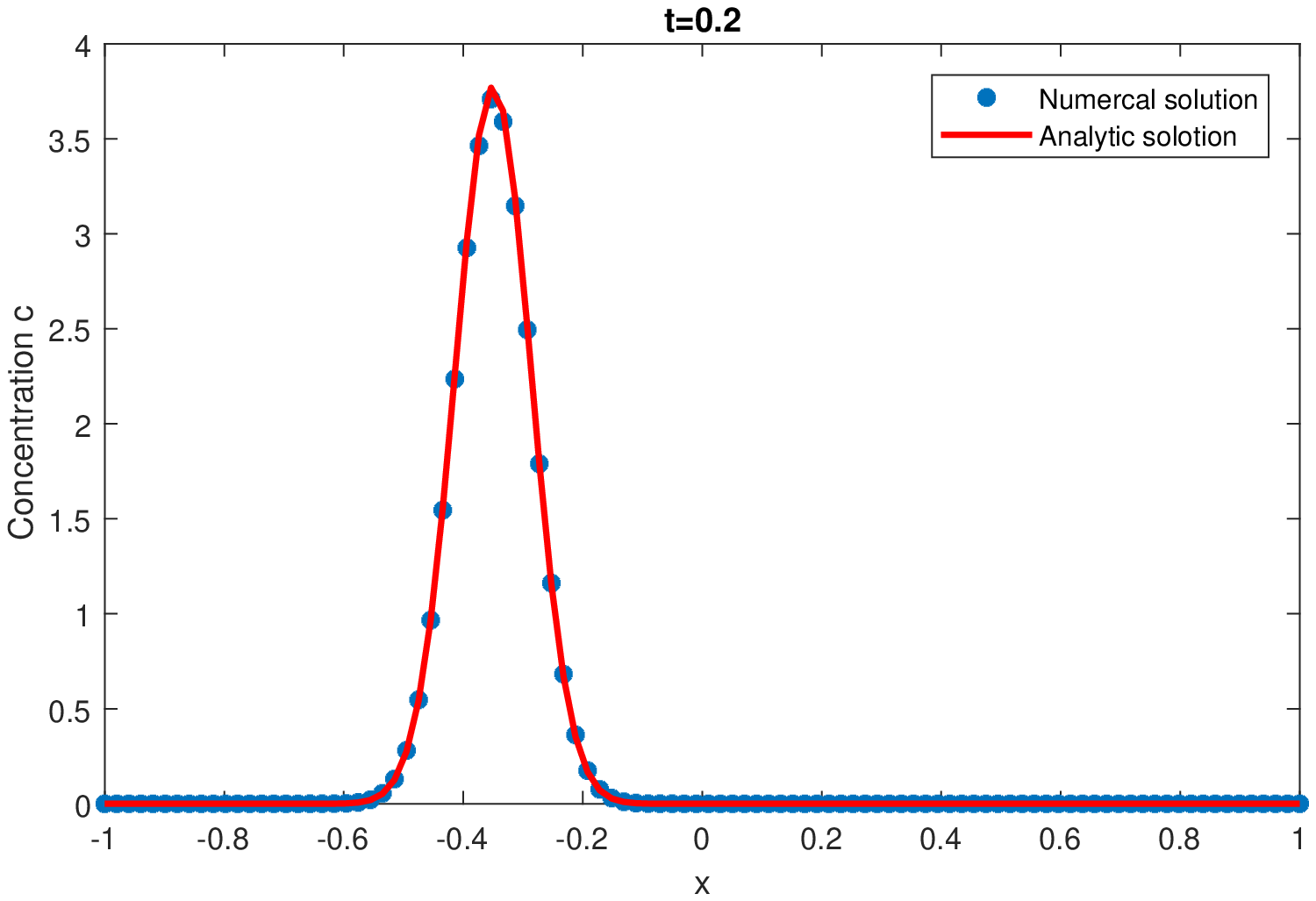} 
 \includegraphics[scale=0.22]{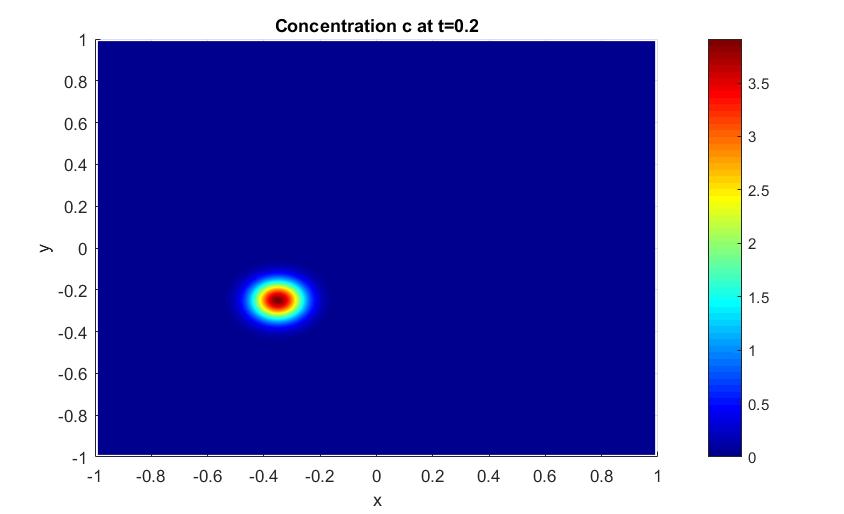} \quad
 
\includegraphics[scale=0.33]{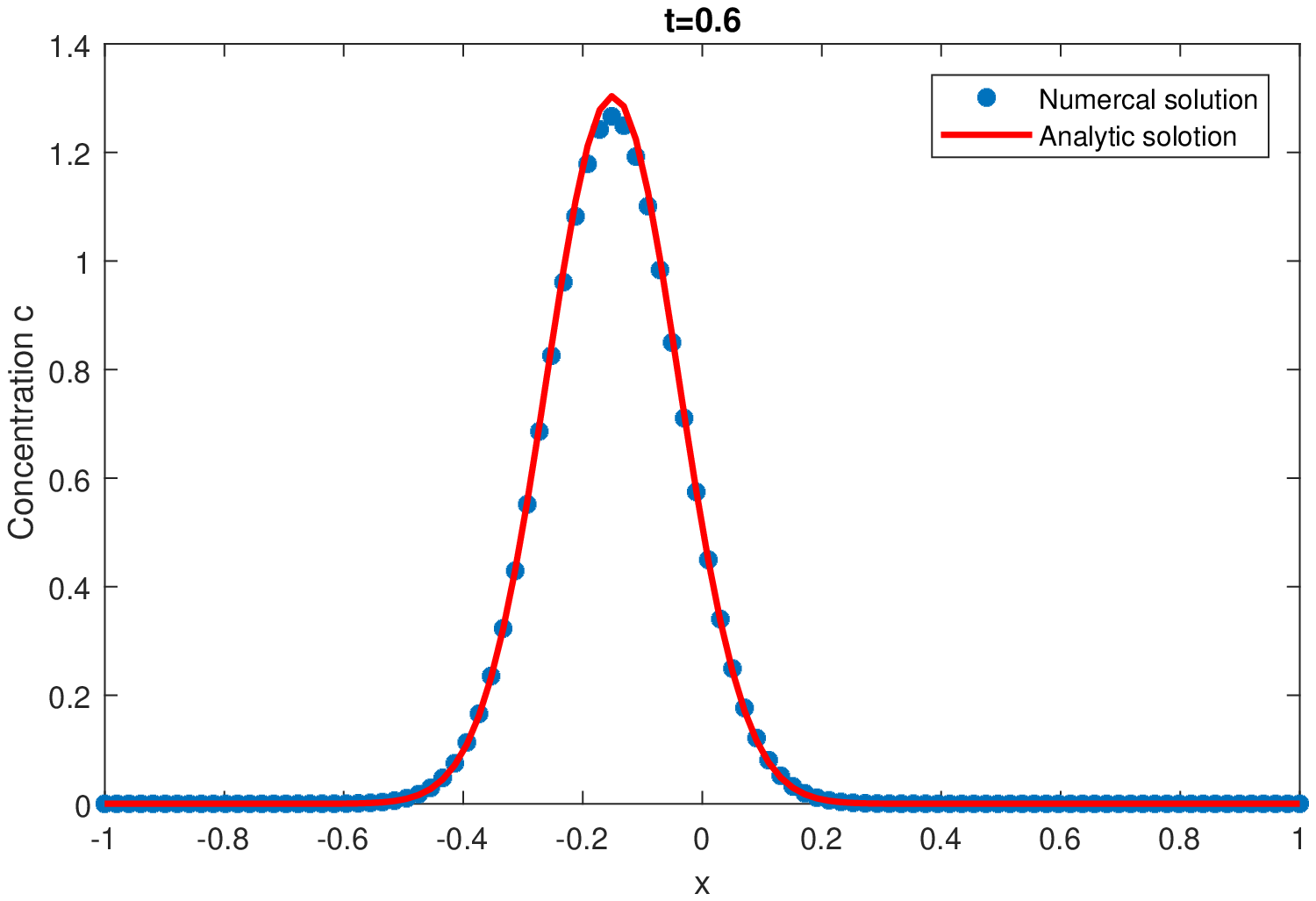} 
\includegraphics[scale=0.22]{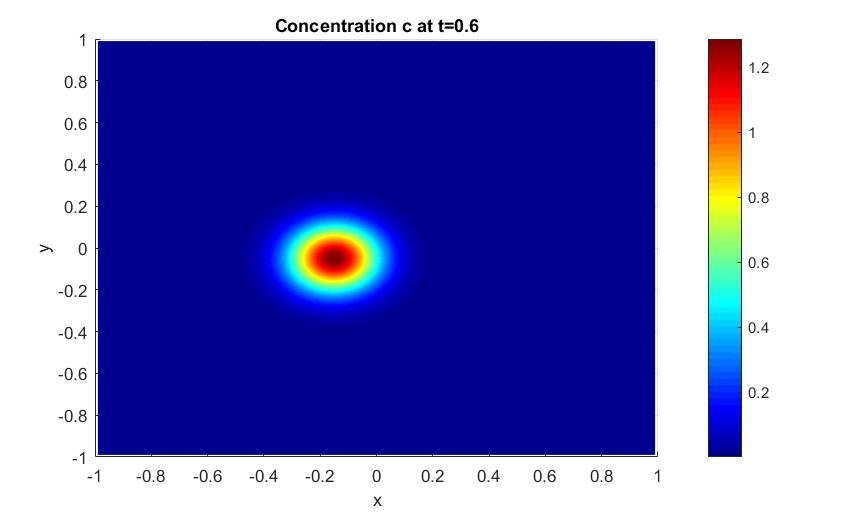} \quad

\includegraphics[scale=0.33]{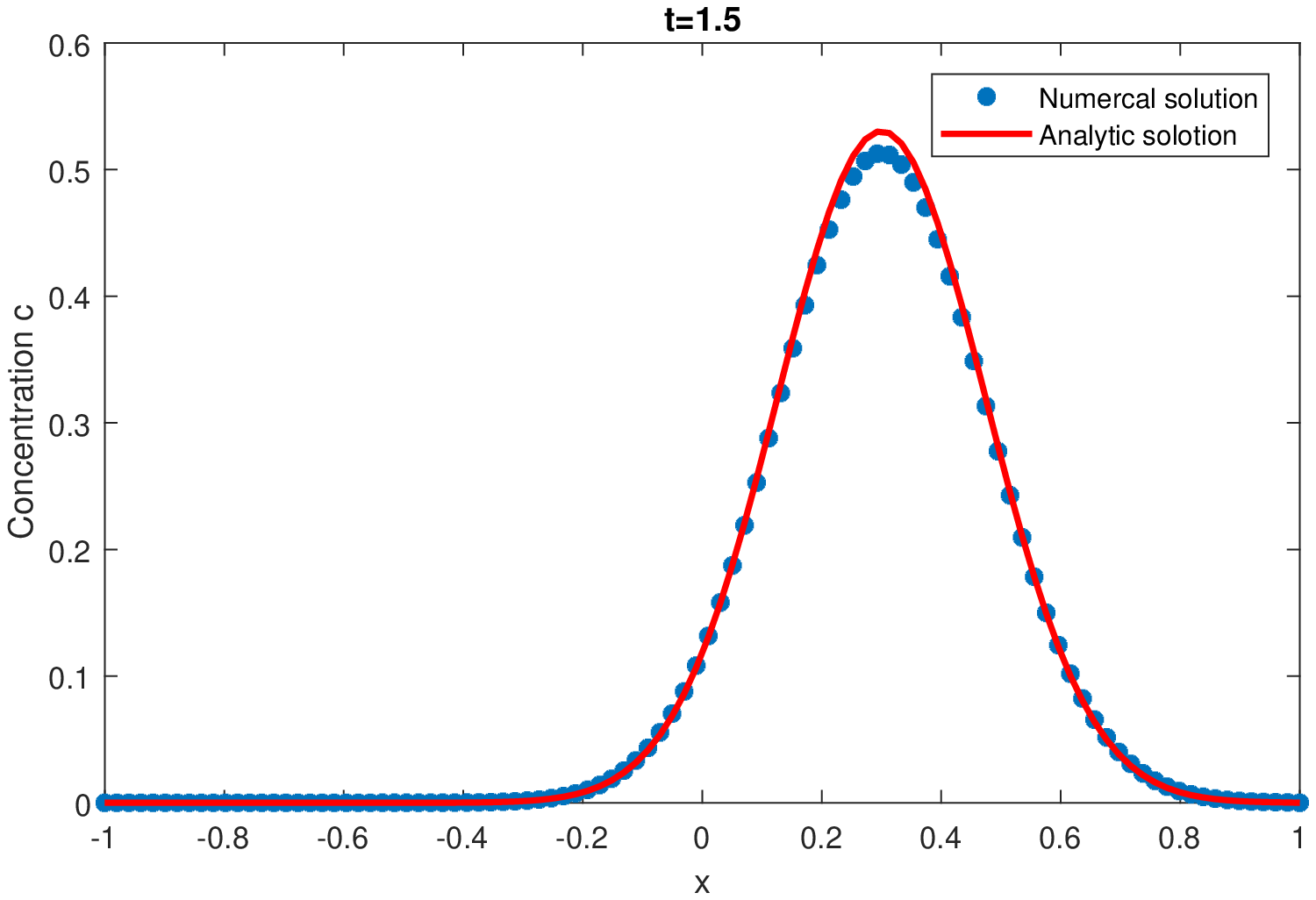} 
 \includegraphics[scale=0.22]{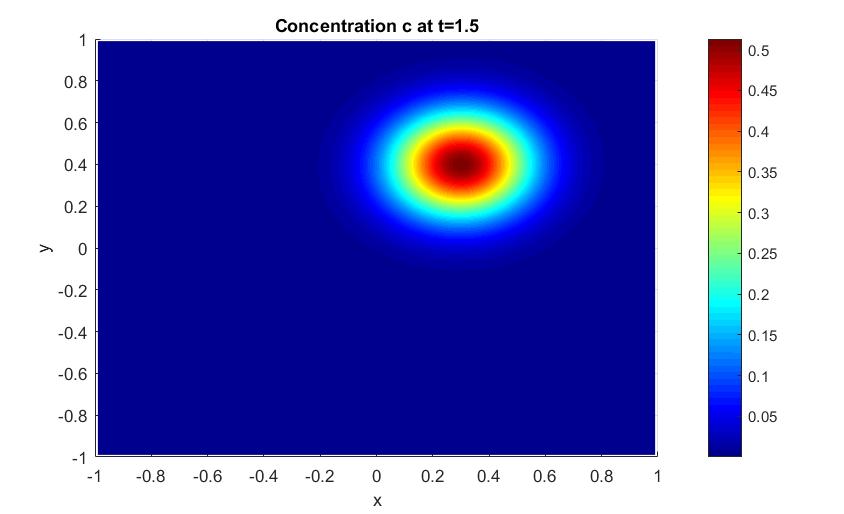} 
\caption{Modeling of the convection-diffusion process using the proposed scheme. Numerical and analytical solutions at times $t=0.2$, $t=0.6$ and $t=1.5$. }
\label{FIG4}
\end{center}
\end{figure}

\newpage
\subsection{Example 5: Pollutant transport over complex bottom topography}\label{T5}
Here we perform a numerical test which is widely used in previous studies \citep{begnudelli2006unstructured,behzadi2018solution,moukalled2016finite,vanzo2016pollutant,liang2010well} as a benchmark utilized to measure the performance of numerical models to simulate problems involving flows with scalar transport over wet/dry areas. In this numerical example, the computational domain $[0,75]\times[-15,15]$ and we consider a 2-D dam-break located at $x=16$ with still water in the initial condition in which the water depth $h=1.875$ is used upstream the dam, whereas downstream the dam the water depth is set to zero. The pollutant has an uniform concentration $c=1$ in the wet part for  $x\leq 10$ and $c=0$ in the remaining wet part. Hence the initial conditions are:
\begin{equation}
h(x,y,0)=\left\{ 
\begin{array}{ll}
1.875, & \text{if }\quad x\leq 16 , \\ 
0,& \quad\text{otherwise,} %
\end{array}%
\right. ,\quad c(x,y,0)=\left\{ 
\begin{array}{ll}
1, & \text{if }\quad x\leq 10 , \\ 
0,& \quad\text{otherwise,} %
\end{array}%
\right. \quad \bm{u}(x,y,0)=0.
\end{equation}

We consider the following variable topography with three humps, two of which are small and located respectively at $(x,y)=(30,6)$ and $(x,y)=(30,24)$, and the bigger one is located at $(x,y)=(47.5,15)$. 
\begin{equation}
\begin{aligned}
B(x,y)=\max & \left ( 0,1-\dfrac{1}{8}\sqrt{(x-30)^2+(y+9)^2},3-\dfrac{3}{10}\sqrt{(x-47.5)^2+y^2},\right.
\\ &\left. 1-\dfrac{1}{8}\sqrt{(x-30)^2+(y+9)^2}\right),
\end{aligned}
\end{equation}
and the Manning's coefficient is set to $n_f= 0.018$.

Our numerical simulations are performed using wall boundary conditions at all sides of the domain which is discretized using an average cell area $\left |\bar{D_{j}} \right |=2.98 \times 10^{-2}$. Figures \ref{Fig5} and \ref{Fig6} illustrate the evolution of computed solutions of the water 
depth (left) and the scalar concentration (right). The results of this numerical tests using the proposed numerical scheme are comparable with those of previous studies \citep{moukalled2016finite,behzadi2018solution,vanzo2016pollutant,begnudelli2006unstructured,liang2010well}). As shown in these figures at time $t=2$, the water reaches the two smaller humps, and the front of the zone with solute concentration is still located approximately at $x=10$. At time $t = 6$, the dam-break flow has passed and submerged the two smaller humps and water started to climb the big one, and the pollutant concentration had evolved by keeping an aligned front. At time $t=12$, we observe that the scalar concentration is affected by the flow and its shape is deformed while keeping a symmetrical distribution.
At $t = 18$, the water has reached the end of our domain and a small water wave is created which returns inside the domain. The flow start to stabilize with time and finally, at time $t=250$ steady state is almost achieved with velocities approaching zero and the three humps are partially wet (dry on the pick). At the same time, the distribution of the scalar concentration in the wet region has smooth profile. 
\begin{figure}
\begin{center}
\includegraphics[scale=0.3]{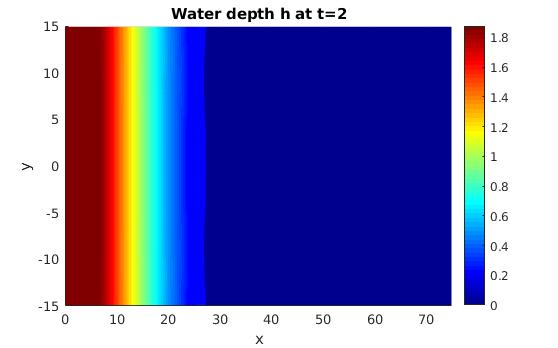} \quad \includegraphics[scale=0.3]{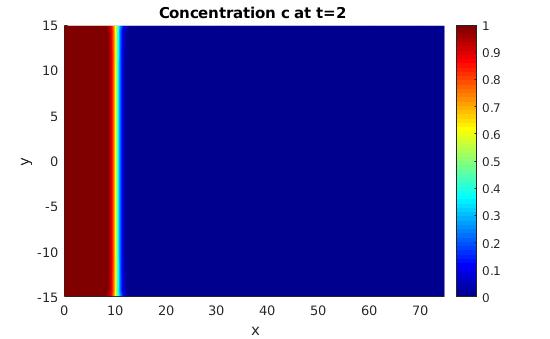} 

\includegraphics[scale=0.3]{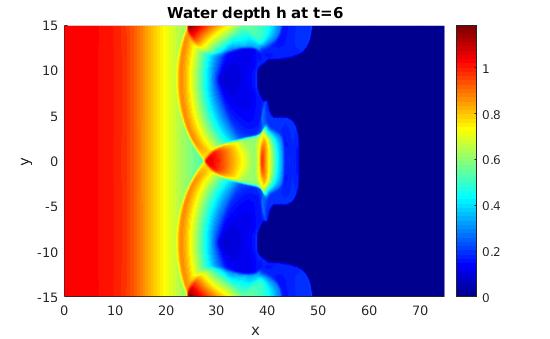} \quad \includegraphics[scale=0.3]{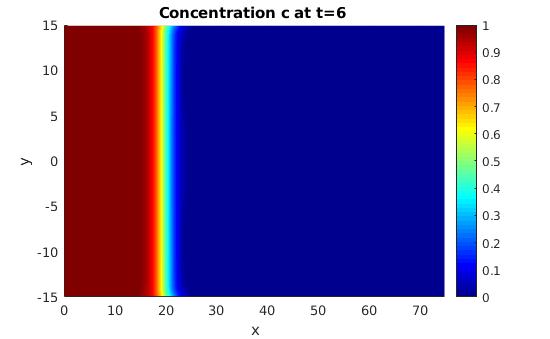}

\includegraphics[scale=0.3]{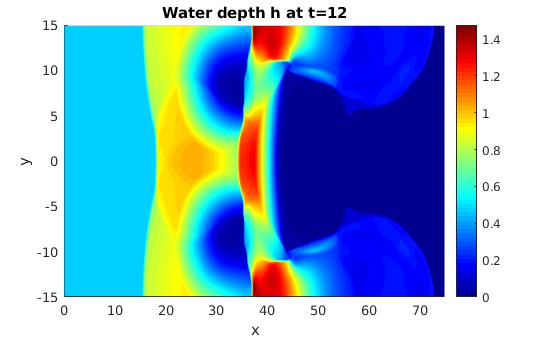} \quad \includegraphics[scale=0.3]{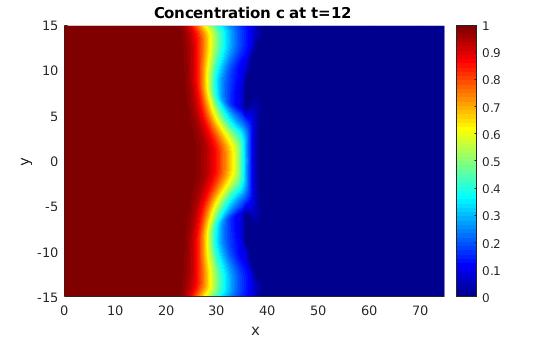} 

\includegraphics[scale=0.3]{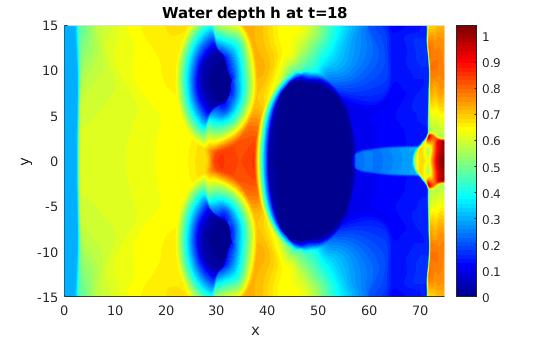} \quad \includegraphics[scale=0.3]{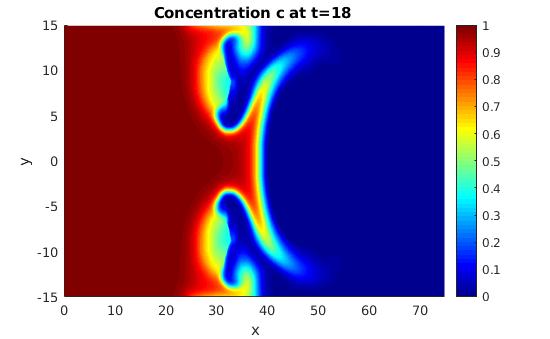} 
\caption{Distribution of water depth (left) and the concentration  (right) in a dam break over complex bottom topography at times $t=2$, $t=6$, $t=12$, and $t=18$.}
\label{Fig5}
\end{center}
\end{figure}
\begin{figure}[h!]
\begin{center}
\includegraphics[scale=0.3]{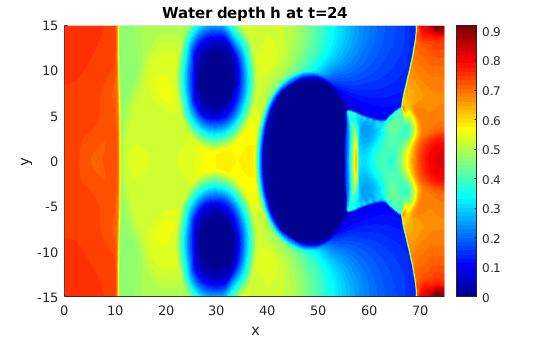} \quad \includegraphics[scale=0.3]{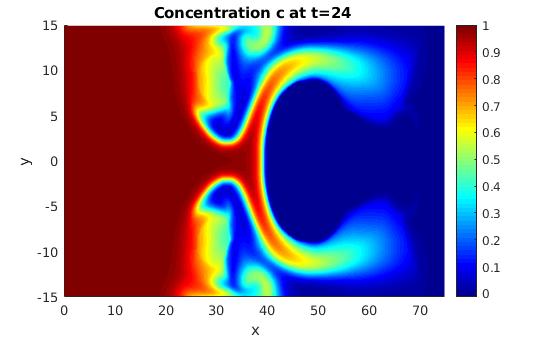} 

\includegraphics[scale=0.3]{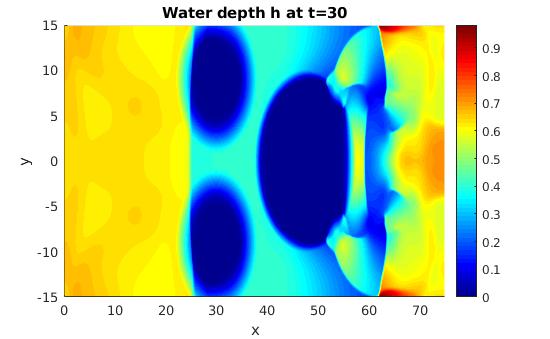} \quad \includegraphics[scale=0.3]{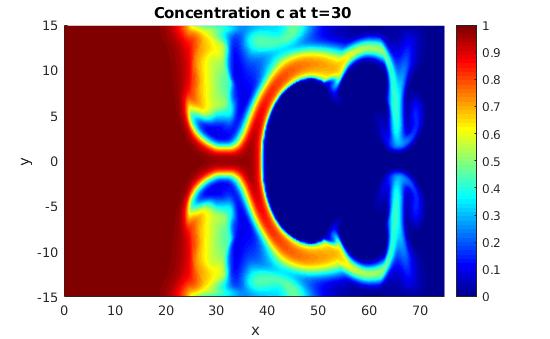} 

\includegraphics[scale=0.3]{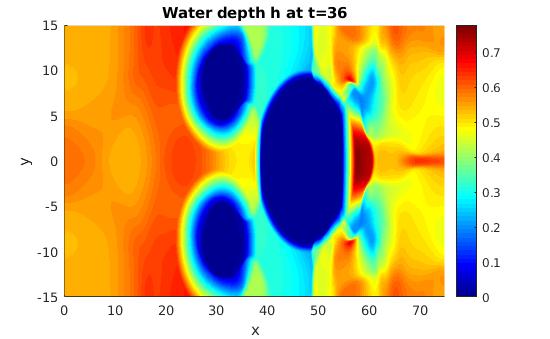} \quad \includegraphics[scale=0.3]{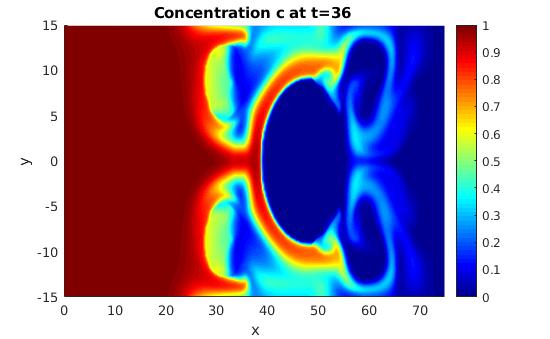} 

\includegraphics[scale=0.3]{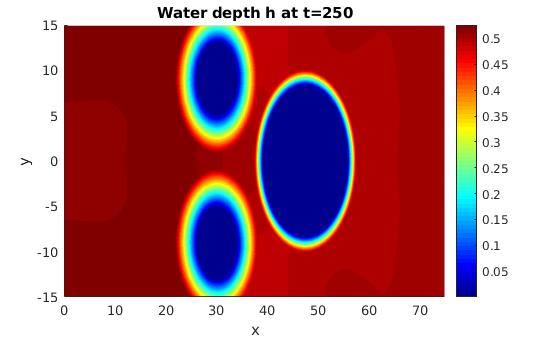} \quad \includegraphics[scale=0.3]{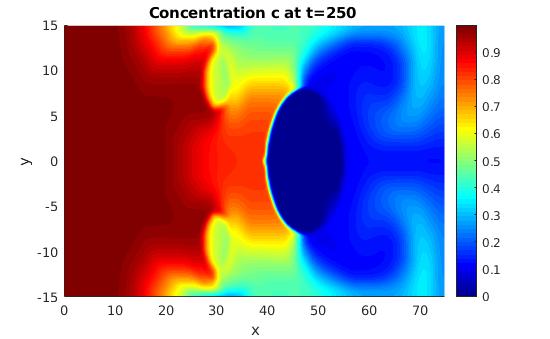} 
\caption{Distribution of water depth (left) and the concentration  (right) in a dam break over complex bottom topography at times  $t=24$, $t=30$, $t=36$, and $t=250$. }
\label{Fig6}
\end{center}
\end{figure}
\section{Concluding remarks}\label{S10}
In this study, we proposed a well-balanced positivity preserving finite volume scheme for modeling coupled systems of shallow water flows and scalar transport model over variable topography with diffusion term and source term due bottom friction effects.
Our approach is based on the semi-discrete formulation of the central-upwind scheme on cell-vertex grids \cite{beljadid2016well} where we developed new discretization techniques for the water surface elevation and the concentration of the pollutant. The proposed techniques have advantages in reducing numerical dissipation observed in the original scheme \cite{beljadid2016well} for the scalar concentration. The developed method preserves the steady state of a lake at rest and the positivity of the water depth. For the scalar concentration, the proposed scheme preserves the positivity and a perfect balance of the concentration 
where constant-concentration states are preserved in space and time for any hydrodynamic field of the flow over variable bottom topography in the absence of source terms in the scalar transport equation. We proved that the proposed scheme satisfies the discrete maximum-minimum principle for the scalar 
concentration. Our numerical simulations have confirmed the well-balanced and positivity preserving properties of the proposed scheme and the accuracy of our techniques in predicting the solutions of the coupled model of shallow water flow and solute transport.

\section*{\textbf{Acknowledgments}}
The second author acknowledges funding from UM6P/OCP Group. 
The third author was partially supported by the Innovative Training Networks (ITN) grant 642768 (ModCompShock) and by the Centre National de la Recherche Scientifique (CNRS). 

\bibliographystyle{elsarticle-num}

\bibliography{ref}

\end{document}